\documentclass[12pt]{article}

\usepackage{amssymb}
\usepackage{amsmath}
\usepackage{amsthm}
\usepackage{dsfont}
\usepackage[english]{babel}
\usepackage{multicol}
\usepackage{textcomp}
\usepackage{enumerate}
\usepackage{booktabs}
\usepackage{multirow}
\usepackage{array,color,colortbl} 
\providecommand{\Z}{\mathbb{Z}}
\providecommand{\tt}{\texttt}
\providecommand{\mk}{\cellcolor[gray]{.8}\tt}
\usepackage[sort&compress,numbers]{natbib}
\usepackage{todonotes}
\usepackage{authblk}
\newtheorem{theorem}{Theorem}
\newtheorem{lemma}{Lemma}
\newtheorem{corollary}{Corollary}
\newtheorem{conjecture}{Conjecture}
\newtheorem{proposition}{Proposition}

\theoremstyle{definition}

\addtolength{\textwidth}{26mm}
\addtolength{\textheight}{36mm}
\addtolength{\hoffset}{-13mm}
\addtolength{\voffset}{-18mm}
\renewcommand{\geq}{\geqslant}
\renewcommand{\leq}{\leqslant}
\renewcommand{\ge}{\geqslant}
\renewcommand{\le}{\leqslant}



\newcommand{\comp}[1]{\overline{#1}}


  \renewenvironment{thebibliography}[1]{%
    \begin{oldthebibliography}{#1}%
      \setlength{\parskip}{0.2ex}%
      \setlength{\itemsep}{0.2ex}%
  }%
  {%
    \end{oldthebibliography}%
  }
\title{Transversals in Latin arrays with many distinct symbols}

\author{Darcy~Best\thanks{Research supported by Endeavour Postgraduate Scholarship and the NSERC PGS-D.}}
\author{Kevin~Hendrey}
\author{Ian~M.~Wanless\thanks{Research supported by ARC grant DP150100506.}}
\author{Tim~E.~Wilson}
\author{David~R.~Wood\thanks{Research supported by ARC grant FT1310464.}}

\affil{\small School of Mathematical Sciences\\ 
\small Monash University\\
\small Clayton Vic 3800 Australia\\ 
\small\texttt{ 
\{darcy.best,kevin.hendrey,ian.wanless,timothy.e.wilson,david.wood\} @monash.edu
}
}
\date{}

\begin{document}

\maketitle

\begin{abstract}
  An array is {\it row-Latin} if no symbol is repeated within any row.
  An array is {\it Latin} if it and its transpose are both row-Latin.
  A {\it transversal} in an $n\times n$ array is a selection of $n$
  different symbols from different rows and different columns.  We
  prove that every $n \times n$ Latin array containing at least $(2-\sqrt{2}) n^2$
  distinct symbols has a transversal.  Also, every $n \times n$ row-Latin array
  containing at least $\frac14(5-\sqrt{5})n^2$ distinct symbols has
  a transversal.  Finally, we show by computation that every Latin
  array of order $7$ has a transversal, and we describe all smaller
  Latin arrays that have no transversal.
\end{abstract}

\section{Introduction}

This paper deals with square arrays of symbols. By an {\it entry} of such an
array $A$, we mean a triple $(i,j,A_{ij})$ where $A_{ij}$ is the symbol in
cell $(i,j)$ of $A$.
A {\it partial transversal of length $k$} in an array is a selection
of $k$ entries no pair of which agree in any of their three coordinates.
A {\it transversal} of an $n\times n$ array is a partial transversal
of length $n$ and a {\it near transversal} is a partial transversal of
length $n-1$. An array is {\it Latin} if no symbol
appears more than once in any row or column. Thus, an $n\times n$
Latin array may contain anywhere from $n$ to $n^2$ distinct
symbols. If it has just $n$ distinct symbols, then it is a {\it Latin
  square}.  Transversals of Latin squares were first studied to
construct mutually orthogonal Latin squares. Since then they have
garnered a lot of interest in their own right and lead to several
famous long-standing conjectures (see \cite{Wan11} for a survey).




For even orders $n$ there are at least $n^{n^{3/2}(1/2-o(1))}$ Latin
squares that do not have transversals \cite{CW17}. However, for
$n\times n$ Latin arrays, as
the number of distinct symbols increases, there must come a point
beyond which it becomes impossible to avoid transversals. This paper
is motivated by the question of when this threshold occurs.  Let
$\ell(n)$ be the least positive integer such that $\ell(n)\ge n$ and
every Latin array of order $n$ with at least $\ell(n)$ distinct symbols
contains a transversal. This function was introduced by Akbari and
Alipour \cite{AkbariAlipour04}, who calculated $\ell(n)$ for $n\leq 4$
and showed that $\ell(5)\geq 7$ and $\ell(2^k-2)>2^k$ for every
integer $k>2$. Counter-intuitively, every Latin square of order 5
contains a transversal, but there is a Latin array of order 5 with six
symbols and no transversal. Hence, it is not always true that increasing
the number of symbols increases the number of transversals.
Nevertheless, $\ell(n)$ is well defined since an $n\times n$ Latin
array with $n^2$ different symbols certainly has a transversal.
Akbari and Alipour put forward the following conjectures:

\begin{conjecture}\label{conj:ln}
For every integer $n\geq 3$, we have $\ell(n)\le n^2/2$.
\end{conjecture}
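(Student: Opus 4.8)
The plan is to pass to the standard reformulation in terms of edge-colourings. An $n\times n$ Latin array is precisely a proper edge-colouring of the complete bipartite graph $K_{n,n}$ (the rows and columns form the two vertex classes, and the symbol in cell $(i,j)$ is the colour of the edge $ij$): the Latin condition is exactly properness, and a transversal is a \emph{rainbow} perfect matching, i.e.\ a perfect matching whose $n$ edges receive $n$ distinct colours. In this language Conjecture~\ref{conj:ln} asserts that every proper edge-colouring of $K_{n,n}$ using at least $n^2/2$ colours admits a rainbow perfect matching. Write $m_s$ for the number of occurrences of symbol $s$ and $d$ for the number of distinct symbols, so that $\sum_s m_s=n^2$ and we are given $d\ge n^2/2$.

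The first tool I would deploy is the first-moment method applied to a uniformly random perfect matching $M$ (equivalently a uniformly random permutation $\sigma$, with $M=\{(i,\sigma(i))\}$). Two equal-symbol cells necessarily lie in distinct rows and distinct columns, so they occupy four distinct vertices and both appear in $M$ with probability $(n-2)!/n!=1/(n(n-1))$. Hence the expected number of monochromatic pairs of $M$ is
\[
\mathbb{E}[X]=\frac{1}{n(n-1)}\sum_s\binom{m_s}{2}.
\]
As $X$ is a nonnegative integer, $\mathbb{E}[X]<1$ forces $\Pr[X=0]>0$ and produces a transversal. This already settles the \emph{balanced} extreme: if every colour class has size at most two then $\sum_s\binom{m_s}{2}\le n^2/2$, so $\mathbb{E}[X]\le \tfrac{n}{2(n-1)}<1$ for every $n\ge 3$. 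Strikingly, this is exactly the range of the conjecture, and it matches the extremal colouring in which $E(K_{n,n})$ is split into $n^2/2$ pairs of independent edges.

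The difficulty, and the reason the plain first moment reaches only the weaker thresholds proved above, is that $\sum_s\binom{m_s}{2}$ is convex and is maximised not by balanced multiplicities but by roughly $n/2$ symbols of multiplicity about $n$ together with about $n^2/2$ singletons; there $\sum_s\binom{m_s}{2}\approx n^3/4$ and the uniform first moment fails by a factor of order $n$. This regime cannot be dismissed, because the adversary can confine all singleton cells to fewer than $n$ lines (e.g.\ $\tfrac{n-1}{2}$ rows together with $\tfrac{n-1}{2}$ columns still contain more than $n^2/2$ cells), so by König the singletons alone need not host a perfect matching and some heavy-symbol edges must be used. My plan to close the gap is therefore to replace the uniform measure by a \emph{weighted} random matching, drawn with probability proportional to a product of edge weights that favour singleton-coloured edges and penalise heavy colours: the collision probability for a class of size $m_s$ is then damped, and one re-optimises the analogue of $\mathbb{E}[X]$ over the weight profile and the multiplicity distribution. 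Where this is not enough I would supplement it with augmenting-path (switching) arguments that peel heavy symbols off one at a time onto a shrinking subarray, or with the Lovász Local Lemma to exploit the locality of the monochromatic-pair events.

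I expect the main obstacle to be precisely this tight control of the weighted model: the normalising constant of the weighted measure is a permanent of the weight matrix, which is notoriously hard to estimate, and the properness of the colouring must be used to bound the correlations between distinct monochromatic-pair events. Pushing the constant all the way down to exactly $1/2$, so that it meets the extremal configuration identified above, demands an essentially tight estimate rather than the first-order bound that yields $(2-\sqrt{2})n^2$; it is this tightness that I believe accounts for Conjecture~\ref{conj:ln} remaining open.
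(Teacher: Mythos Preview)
The statement you are addressing is labelled \emph{Conjecture} in the paper, and the paper does not prove it; indeed the authors explicitly say that even getting a subquadratic bound on $\ell(n)$ ``seems hard''. So there is no ``paper's own proof'' to compare against, and your proposal should be read as a research plan rather than a proof.

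As a plan it is honest about its own shortcomings: the only fully justified step is the first-moment calculation in the special case where every symbol occurs at most twice, which is far from the general statement. The remainder --- biasing the random matching by edge weights, estimating the resulting permanent, and combining with switching or the Local Lemma --- is a reasonable wish list but contains no new idea that would push beyond what is already known. The paper itself carries out the Local Lemma approach in Section~\ref{sec:probabilistic} (via the Clique LLL over the Erd\H{o}s--Spencer lopsidependency graph) and obtains only $\ell(n)\lesssim 0.8945\,n^2$; its stronger bound $\ell(n)\le(2-\sqrt{2})n^2\approx 0.5858\,n^2$ comes from an entirely non-probabilistic extremal argument (Lemmas~\ref{lem:upper-bound-from-larges}--\ref{lem:clone-near-trans-implies-few-syms}), which your proposal does not touch. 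In particular, the weighted-matching idea would need a genuinely new ingredient --- a sharp two-sided estimate on the permanent of a structured weight matrix exploiting the Latin property --- to have any chance of reaching $n^2/2$, and you have not supplied one. As written, then, the proposal neither proves the conjecture nor indicates a concrete route that improves on the paper's $(2-\sqrt{2})n^2$.
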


\begin{conjecture}\label{conj:noconst}
For every integer $c$, there exists a positive integer $n$ such that
$\ell(n)>n+c$.
\end{conjecture}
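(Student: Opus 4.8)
\noindent\textit{Proof proposal.}
Note that $\ell(n)>n+c$ is equivalent to the existence of an $n\times n$ Latin array with at least $n+c$ distinct symbols and no transversal. So the conjecture asks whether the \emph{symbol surplus} --- the number of distinct symbols minus the order --- is unbounded over Latin arrays without a transversal, and the plan is to construct, for each $c\ge 1$, such an array with surplus at least $c$.

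The subtlety to keep in mind is that introducing new symbols never destroys an existing transversal, so any construction must be built around a no-transversal obstruction that survives the appearance of many fresh symbols. The obstruction for even cyclic Cayley tables is not robust in this sense: since $\mathbb Z_{2m}$ has near-transversals and a near-transversal extends through any repeated symbol, splitting even one symbol there creates a transversal. Hence I would look for a block construction: tile an $(rm)\times(rm)$ array by $m\times m$ blocks whose block-level pattern is a Latin array of order $r$ with no transversal (for instance $r$ even and the $\mathbb Z_r$ pattern), fill the blocks so that every block-row and block-column stays Latin, and introduce as many new symbols in the block interiors as the Latin condition allows. The number of new symbols one can pack in grows with $m$, so a large enough $m$ pushes the surplus past $c$; the aim is then to argue that a transversal of the whole array would force a transversal-like configuration of the block pattern, contradicting its choice.

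The decisive step, and the one I expect to be hard, is ruling out a transversal once the array is symbol-rich. A transversal need not distribute itself evenly among the blocks, and on any cell where it picks up a brand-new symbol it is essentially unconstrained, so the way blocks share symbols (and the way the block pattern is realised) must be arranged adversarially to stop a transversal from slipping out through a few such cells. I expect this to require the block pattern to satisfy more than just ``no transversal'' --- say, no transversal and also no near-transversal with a prescribed deficient row, column, and symbol --- a property to be propagated through the construction as an invariant; establishing that the invariant is preserved is the crux. A second line of attack, should the direct construction resist, is amalgamation: take small seeds such as the order-$5$ array with six symbols, or the Akbari--Alipour arrays of order $2^k-2$ with surplus two, and find an operation that glues two no-transversal arrays on disjoint symbol sets into a larger Latin array whose surplus is at least the sum of the surpluses plus the unavoidable number of fresh symbols in the off-diagonal blocks, then iterate; as before, the content is in checking that the gluing preserves the strengthened no-transversal property.
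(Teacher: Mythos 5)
You have not proved the statement, and neither does the paper: this is an open conjecture of Akbari and Alipour, restated in the paper without proof, and the authors even report that their computations (every Latin array of order $7$ has a transversal; extensive unsuccessful searches at orders $8$ and $9$) make them \emph{skeptical} that it is true. Your proposal is explicitly a plan rather than an argument --- you yourself flag that the decisive step, showing the symbol-rich construction has no transversal, is missing --- and that step is precisely where all known difficulty sits. All that is currently known in the direction of the conjecture is surplus $2$: the order-$5$ array with six symbols, the Akbari--Alipour arrays of order $2^k-2$ with $2^k$ symbols, and one order-$6$ class with eight symbols; nothing with surplus $3$ is known for any order.

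Beyond the missing step, the two routes you sketch fail concretely in their natural instantiations. For the block construction: a transversal of the blown-up array induces no transversal-like structure at the block level (the blocks it meets need not correspond to a permutation of block-rows and block-columns), and the moment you fill block interiors with fresh symbols, cells lying in different blocks carrying distinct new symbols impose no symbol conflicts at all, so the block-level obstruction simply evaporates. The known obstructions to transversals (e.g.\ the parity/$\Delta$-lemma arguments such as the one used in Section~4 for $L_{15},\dots,L_{19}$) depend on the symbol multiset being tightly constrained, which is exactly what packing in new symbols destroys; the paper's Theorems~\ref{th:rowLatin-bound} and \ref{thm:main-result} quantify this tension by showing that enough distinct symbols \emph{force} a transversal. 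For the amalgamation route: if you glue two transversal-free arrays of equal order diagonally and fill the off-diagonal blocks with fresh distinct symbols, then any selection taking all its cells from the two off-diagonal blocks is immediately a transversal, so the gluing must reuse symbols heavily in the off-diagonal blocks, which caps the surplus and returns you to the original problem. A correct proof would need a genuinely new robust obstruction (or a disproof would need progress related to Proposition~\ref{prop:ln-implies-brualdi}); as it stands your proposal neither supplies one nor identifies a candidate that survives these checks.
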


Up until this point, it was unknown whether there is some
constant $c<1$ such that $\ell(n)\leq cn^2$ for every integer
$n>1$. In Sections~\ref{sec:probabilistic}~and~\ref{sec:Latin-arrays},
we provide two independent proofs of such a result. The proof in
Section~\ref{sec:Latin-arrays} gives a better bound, but the other is
of independent interest since it demonstrates an entirely different
(probabilistic) approach. In Section~\ref{sec:small-values}, we
determine $\ell(n)$ exactly for $n\leq 7$.

On first glance, Conjecture~\ref{conj:ln} seems very generous
and that maybe $\ell(n)$ even has a linear upper bound. However, the
problem is deceptively hard, and the following observation gives some hint
as to why.

\begin{proposition}\label{prop:ln-implies-brualdi}
Let $k$ be a non-negative integer.  If $\ell(n) \leq 2kn+n-k^2-k$ for
all $n$, then every Latin square of order $n$ has a partial
transversal of length $n-k$.
\end{proposition}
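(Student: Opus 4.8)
The plan is to obtain a partial transversal in an arbitrary Latin square $L$ of order $n$ by \emph{embedding} $L$ into a larger Latin array that is forced to contain a transversal. Take any Latin square $L$ of order $n$ with symbol set $\{1,\dots,n\}$, and form an $(n+k)\times(n+k)$ array $A$ by placing $L$ in the top-left $n\times n$ block and filling each of the remaining $(n+k)^2-n^2=2kn+k^2$ cells with its own brand-new symbol, all of these new symbols being distinct from one another and from $1,\dots,n$. One checks that $A$ is Latin: every row meeting $L$ is a permutation of $\{1,\dots,n\}$ followed by $k$ pairwise distinct new symbols, every row below $L$ consists of $n+k$ pairwise distinct new symbols, and the columns behave symmetrically. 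So $A$ is a Latin array of order $n+k$ with exactly $n+2kn+k^2$ distinct symbols. Applying the hypothesis with $n$ replaced by $n+k$, and using the identity $2k(n+k)+(n+k)-k^2-k=n+2kn+k^2$, we see that $A$ has at least $\ell(n+k)$ distinct symbols, so $A$ has a transversal $T$ by the definition of $\ell$.

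The heart of the argument is a counting step showing that $T$ must use many cells of the embedded $L$. Partition the cells of $A$ into four blocks --- the top-left $n\times n$ block (which is $L$), the top-right $n\times k$ block, the bottom-left $k\times n$ block, and the bottom-right $k\times k$ block --- and let $a,b,c,d$ be the numbers of cells of $T$ in each, respectively. Since $T$ hits each of the $n+k$ rows exactly once and each of the $n+k$ columns exactly once, we get $a+b=n$, $c+d=k$, $a+c=n$, and $b+d=k$, whence $b=c=n-a$ and $d=a-(n-k)$. As $d\ge 0$, this forces $a\ge n-k$. The $a$ cells of $T$ inside the $L$-block lie in distinct rows, in distinct columns, and carry distinct symbols, so they constitute a partial transversal of $L$ of length $a\ge n-k$; discarding surplus cells gives one of length exactly $n-k$, as required.

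There is no deep obstacle here --- the argument is short --- but the decisive choice is to augment $L$ to order \emph{exactly} $n+k$. If instead one augmented to order $N$, the corner block would be $(N-n)\times(N-n)$ and $d\ge 0$ would only yield $a\ge 2n-N$, which is useless; the tight choice $N=n+k$ is precisely what shrinks the corner block enough to trap $n-k$ of the transversal's cells inside $L$, and it is also exactly the choice that makes the symbol count $n+2kn+k^2$ coincide with the bound in the hypothesis. Consequently the only points needing real care are verifying that $A$ is Latin and the elementary identity $2k(n+k)+(n+k)-k^2-k=n+2kn+k^2$.
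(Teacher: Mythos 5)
Your proof is correct and follows essentially the same route as the paper: embed $L$ as the top-left block of an $(n+k)\times(n+k)$ Latin array filled elsewhere with distinct new symbols, note it has $n+2kn+k^2\ge\ell(n+k)$ symbols and hence a transversal, then observe that at most $2k$ of its cells avoid the copy of $L$. Your four-block counting is just a more explicit version of the paper's one-line observation, so there is nothing further to add.
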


\begin{proof}
 Let $L$ be any Latin square of order $n$. Let $M$ be a Latin array of
 order $n+k$, which has $L$ as the top-left $n \times n$ subarray and
 all remaining entries are new distinct symbols. The number of symbols
 in $M$ is $n+2nk+k^2\ge\ell(n+k)$, so there must be a transversal in
 $M$. This transversal hits at most $2k$ cells in the last $k$ rows or
 columns of $M$, so it must intersect the copy of $L$ in at least
 $n-k$ cells, each of which contains a different symbol.
\end{proof}

Putting $k=1$, we see that if $\ell(n)\leq 3n-2$ for all $n$, then
every Latin square has a near transversal.  This would prove a famous
conjecture attributed to Brualdi (see \cite{Wan11}). Indeed, any
linear upper bound on $\ell(n)$ would imply the existence of a
constant $c$ such that every Latin square of order $n$ has a partial
transversal of length $n-c$. The best result to date \cite{HS08} is
that every Latin square has a partial transversal of length
$n-O(\log^2n)$.

There is a broader setting in which quadratically many symbols is
known to be best possible, namely row-Latin arrays. An array is 
{\it row-Latin} if no symbol appears more than once in any row. For every
positive integer $n$, let $\ell_r(n)$ be the least positive integer
such that $\ell_r(n)\ge n$ and every $n\times n$ row-Latin array with
at least $\ell_r(n)$ distinct symbols contains a transversal. Bar{\'a}t and
Wanless \cite{BaratWanless14} showed that $\ell_r(n)>\frac12
n^2-O(n)$.  In Section~\ref{sec:Latin-arrays}, we prove that
$\ell_r(n)\leq\big\lceil\frac{1}{4}(5-\sqrt{5})n^2\big\rceil$ for
every integer $n>1$.

\section{Probabilistic Approach}\label{sec:probabilistic}
	
In this section we use probabilistic methods to prove a bound on
$\ell(n)$.  Let $\mathcal{B} = \{B_1,\ldots,B_t\}$ be a set of events
in a probability space. Usually the events $\mathcal{B}$ are called the \textit{bad events}
because the aim is for them to not occur. Define $\comp{B_i}$ to be the complement of the
event $B_i$. A graph $G$
with vertex set $\mathcal{B}$ is a \textit{lopsidependency graph} if
for all $B_i \in \mathcal{B}$ and for every subset $S$ of the complement of the closed neighbourhood
of $B_i$ in $G$,
\begin{align}\label{eq:lopsi-def}
  \mathbb{P}\Big( B_i \Big| \bigcap_{j \in S} \comp{B_j} \Big) 
  \leq \mathbb{P}(B_i).
\end{align}
 Lopsidependency graphs were introduced by Erd\H{o}s
and Spencer \cite{erdosspencer1991} and are useful because they have 
fewer edges than a naively defined dependency graph.
Intuitively, a lopsidependency graph says that the probability of an
event does not increase when conditioned on an arbitrary set of
non-adjacent events not occurring.

The Clique Lov\'asz Local Lemma by
Kolipaka, Szegedy and Xu \cite{kolipaka2012sharper} gives a condition under
which none of the bad
events occur. Specialising their formulation, we get:

\begin{lemma}\label{l:CLL}
Let $\mathcal{B} = \{B_1,\ldots,B_t\}$ be a set of events with
lopsidependency graph $G$.  Let $\{K_1,\ldots,K_s\}$ be a set of
cliques in $G$ covering all the edges, and assume $\kappa \geq \max_i|K_i|$.
Suppose that no event $B_i$ is in more than $\mu$ of the cliques 
$K_1,\dots,K_s$. If there exist $x \in (0, 1/\kappa)$ such that
$
  \mathbb{P}(B_i) \leq x \left( 1 - \kappa x \right)^{\mu-1}
$
for all $1 \leq i \leq t$, then $$\mathbb{P}\Big( \bigcap_{i=1}^t \comp{B_i} \Big) > 0.$$
\end{lemma}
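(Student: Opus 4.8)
The plan is to obtain Lemma~\ref{l:CLL} as a direct specialisation of the general Clique Lov\'asz Local Lemma of Kolipaka, Szegedy and Xu~\cite{kolipaka2012sharper}, by taking all of its weight parameters equal to the constant $x$. Recall their result in the form we need: if $G$ is a lopsidependency graph for $\mathcal{B}=\{B_1,\dots,B_t\}$, if $\mathcal{K}=\{K_1,\dots,K_s\}$ is a set of cliques of $G$ covering every edge, and if there are nonnegative reals $y_{i,j}$ (one for each pair $(i,j)$ with $B_i\in K_j$) such that $\sum_{i\,:\,B_i\in K_j}y_{i,j}<1$ for every $j$ and
\[
  \mathbb{P}(B_i)\ \le\ y_{i,j}\prod_{\substack{j'\,:\,B_i\in K_{j'}\\ j'\neq j}}\Bigl(1-\sum_{i'\,:\,B_{i'}\in K_{j'}}y_{i',j'}\Bigr)
\]
whenever $B_i\in K_j$, then $\mathbb{P}\bigl(\bigcap_{i=1}^{t}\comp{B_i}\bigr)>0$.

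Given the hypotheses of Lemma~\ref{l:CLL}, I would set $y_{i,j}=x$ for every admissible pair $(i,j)$, with $x\in(0,1/\kappa)$ the value supplied by the hypothesis. The first thing to verify is the per-clique condition: for each clique $K_j$ we have $\sum_{i\,:\,B_i\in K_j}y_{i,j}=|K_j|\,x\le\kappa x<1$, since $|K_j|\le\kappa$; in particular every factor $1-\sum_{i'}y_{i',j'}=1-|K_{j'}|x$ lies strictly between $0$ and $1$.

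The second thing to verify is the per-event inequality. Fix $i$ and a clique $K_j$ containing $B_i$, and write $m_i$ for the number of cliques in $\mathcal{K}$ containing $B_i$, so that $1\le m_i\le\mu$ and the displayed product has exactly $m_i-1$ factors. Each such factor satisfies $1-|K_{j'}|x\ge 1-\kappa x>0$, and $0<1-\kappa x<1$ with $\mu-1\ge m_i-1\ge 0$, so
\[
  x\,(1-\kappa x)^{\mu-1}\ \le\ x\,(1-\kappa x)^{m_i-1}\ \le\ x\prod_{\substack{j'\,:\,B_i\in K_{j'}\\ j'\neq j}}\bigl(1-|K_{j'}|\,x\bigr).
\]
Hence the assumed bound $\mathbb{P}(B_i)\le x(1-\kappa x)^{\mu-1}$ implies the general condition with this choice of the $y_{i,j}$, and the conclusion $\mathbb{P}\bigl(\bigcap_{i}\comp{B_i}\bigr)>0$ follows immediately from the cited theorem.

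There is no substantive obstacle here, as the statement is a routine specialisation. The only point requiring a little care is the bookkeeping in the last display: one must check that the exponent $\mu-1$ dominates the number $m_i-1$ of ``other'' cliques containing $B_i$ (using that each event lies in at most $\mu$ cliques), and that each replacement — passing from $|K_{j'}|x$ to $\kappa x$, and from the product to a power of $1-\kappa x$ — moves the inequality in the direction that weakens, rather than strengthens, the hypothesis we must satisfy.
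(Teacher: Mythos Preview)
Your proposal is correct and follows exactly the approach the paper indicates: the paper does not actually prove Lemma~\ref{l:CLL} but simply states it as a specialisation of the Clique Lov\'asz Local Lemma of Kolipaka, Szegedy and Xu~\cite{kolipaka2012sharper}, and your argument fills in precisely those routine details by taking all weights equal to $x$. The bookkeeping you flag (that $m_i-1\le\mu-1$ and that replacing $|K_{j'}|x$ by $\kappa x$ only weakens the right-hand side) is handled correctly.
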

	
	
We use this lemma to prove:

\begin{theorem}
Let $L$ be a Latin array of order $n$. If $L$ has at least
$({229}n^2+{27}n)/256 \approx 0.8945n^2$ distinct symbols, then $L$ has a transversal.
\end{theorem}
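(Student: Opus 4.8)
The plan is to select the cells $(i,\sigma(i))$ for a uniformly random permutation $\sigma$ of $[n]$ and apply the Clique Lov\'asz Local Lemma (Lemma~\ref{l:CLL}) to show that, with positive probability, these $n$ cells carry $n$ distinct symbols, i.e.\ form a transversal. For a symbol $s$ let $m_s$ denote the number of cells of $L$ with symbol $s$. Since $L$ is Latin, all cells carrying a fixed symbol lie in distinct rows and distinct columns, so any two cells $e\ne e'$ with the same symbol automatically lie in different rows and different columns; call such an $\{e,e'\}$ a \emph{conflict pair}. The bad events are $B_{\{e,e'\}}$, ``$\sigma$ selects both $e$ and $e'$'', one for each conflict pair. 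For every conflict pair $\mathbb{P}(B_{\{e,e'\}})=(n-2)!/n!=1/(n(n-1))$, and if no $B_{\{e,e'\}}$ occurs then $\sigma$ yields a transversal.

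For the lopsidependency graph I would take $G$ on vertex set the conflict pairs, with $\{e,e'\}$ adjacent to $\{f,f'\}$ whenever some cell of one pair lies in a common row or column with some cell of the other (allowing the two cells to coincide). This $G$ contains the lopsidependency graph that Erd\H{o}s and Spencer~\cite{erdosspencer1991} attach to a random permutation (there adjacency requires two \emph{distinct} cells in a common line), so $G$ is itself a lopsidependency graph. For the clique cover, for each line (row or column) $\ell$ let $K_\ell$ be the set of conflict pairs having a cell in $\ell$; any two members of $K_\ell$ have a cell each in $\ell$ and hence are adjacent, so $K_\ell$ is a clique (this is exactly where the enlargement of the Erd\H{o}s--Spencer graph is used, for two same-symbol pairs sharing their cell on $\ell$). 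The $2n$ cliques $K_\ell$ cover every edge of $G$ (an edge witnessed by a common line $\ell$ has both endpoints in $K_\ell$), every conflict pair lies in exactly $\mu:=4$ of them (its two rows and its two columns), and since a conflict pair meets each line it touches in a single cell, $|K_\ell|=\sum_{z\in\ell}(m_{L_z}-1)$.

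The key step is to bound $|K_\ell|$ using the hypothesis. Fix a line $\ell$; its $n$ cells carry $n$ distinct symbols $\lambda_1,\dots,\lambda_n$, which occupy $M:=\sum_j m_{\lambda_j}$ cells of $L$ in total. The other $n^2-M$ cells carry symbols outside $\{\lambda_1,\dots,\lambda_n\}$, so $L$ has at most $n+(n^2-M)$ distinct symbols; combined with the assumption $n+n^2-M\ge (229n^2+27n)/256$, this rearranges to $|K_\ell|=M-n\le 27n(n-1)/256$.

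Finally I would feed this into Lemma~\ref{l:CLL}. We may assume some symbol of $L$ is repeated (otherwise every $\sigma$ gives a transversal), so $\kappa:=27n(n-1)/256\ge\max_\ell|K_\ell|\ge 1$, and with $x:=1/(4\kappa)\in(0,1/\kappa)$ we get $x(1-\kappa x)^{\mu-1}=\tfrac1{4\kappa}(\tfrac34)^3=\tfrac{27}{256\kappa}=\tfrac1{n(n-1)}=\mathbb{P}(B_{\{e,e'\}})$ for every conflict pair, so Lemma~\ref{l:CLL} gives $\mathbb{P}\bigl(\bigcap\comp{B_{\{e,e'\}}}\bigr)>0$ and $L$ has a transversal. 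The main obstacle is choosing the events and cliques so that both the clique size $\kappa$ and the multiplicity $\mu$ are small: ``one event per twice-occurring symbol'' has probability far too large when a symbol is heavily repeated, so conflict-pair events are needed, and these only fit into the $2n$ line cliques after harmlessly enlarging the Erd\H{o}s--Spencer lopsidependency graph; the numerology $\max_x x(1-x)^3=27/256$ is precisely what pins the density threshold in the statement.
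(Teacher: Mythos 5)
Your proposal is correct and follows essentially the same route as the paper's proof: the same conflict-pair bad events for a uniformly random permutation, the Erd\H{o}s--Spencer lopsidependency graph, the $2n$ row/column cliques with $\mu=4$, the bound $\kappa=\frac{27}{256}n(n-1)$ extracted from the symbol-count hypothesis, and the choice $x=1/(4\kappa)$ in Lemma~\ref{l:CLL}. The differences are only presentational (you compute $|K_\ell|$ via symbol multiplicities and dispose of the no-repeated-symbol case explicitly).
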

	
\begin{proof}
Suppose $L$ has at least 
$n^2-cn^2-dn$ distinct symbols. Let $\sigma$ be a
permutation picked uniformly at random from
the symmetric group on $\{1,2,\dots,n\}$. 
Think of $\sigma$ as choosing the cells $(i,\sigma(i))$ for
$1\le i\le n$, which might correspond to a transversal.
Define the bad events,
\begin{align*}
  \mathcal{B} = 
\{(i,j,i',j'):1\le i < i'\le n, \sigma(i)=j, \sigma(i')=j',
L_{ij}=L_{i'j'}\}.
\end{align*}
These events correspond to $\sigma$ choosing a
pairs of cells in $L$ that contain the same symbol.
To prove that a transversal exists we just need to prove
that, with positive probability, none of the bad events occur.
		
The next task is to define the lopsidependency graph which will be
used in applying Lemma~\ref{l:CLL}. Let $G$ be a graph with vertex set
$\mathcal{B}$. An edge $\{(a,b,x,y),(a',b',x',y')\}$ is in $G$ if and
only if at least two of the cells $(a,b)$, $(x,y)$, $(a',b')$ and $(x',y')$
share a row or column. This occurs only if at least one of $x=x'$,
$x=a'$, $a=x'$, $a=a'$, $y=y'$, $y=b'$, $b=y'$ or $b=b'$. 
Erd\H{o}s and Spencer \cite{erdosspencer1991} showed that $G$ is a
lopsidependency graph.

Let $\mathcal{K} = \{K_1,K_2,\ldots,K_{2n}\}$ be a set of cliques of
$G$ defined as follows. Each clique corresponds to a row or column of
$L$. An event $(a,b,x,y)$ is in a clique $K_i$ if $(a,b)$
or $(x,y)$ is in the row or column corresponding to $K_i$. Note that
$K_i \in \mathcal{K}$ is a clique because the events in $K_i$
share a row or column (the one corresponding to $K_i$) and so they are
adjacent in $G$. These cliques cover every edge of $G$ because two
events are adjacent only if they share a row or column. 

Each event in $\mathcal{B}$ corresponds to two cells in distinct rows
and columns, so each event is within exactly four cliques. Thus we
take $\mu=4$.  To find the bound $\kappa$, consider a clique
$K\in\mathcal{K}$ which, without loss of generality, corresponds to
the first row.  Each event in $K$ corresponds to two cells of $L$, one
in the first row and one not in that row.  Let $D$ be the set of cells
outside the first row that are included in some event in $K$.  Each
cell in $D$ shares a symbol with exactly one cell in the first row.
Hence $|K|=|D|$ and the cells not in $D$ contain as many distinct
symbols as $L$ does. Hence $n^2-|K|\ge n^2-cn^2-dn$, which
means that we may take $\kappa=cn^2+dn$.

Taking $x=1/(4\kappa)$, we find that to apply Lemma~\ref{l:CLL} we need
\[
\frac{1}{n(n-1)} = \mathbb{P}(B_i) \le x(1-\kappa x)^3=\frac{27}{256\kappa},
\]
which is satisfied when $c={27}/{256}$ and $d=-{27}/{256}$.
\end{proof}

\section{A Better Bound}\label{sec:Latin-arrays}

In this section, we prove a better bound on $\ell(n)$ using
non-probabilistic methods.  We start by proving results about general
square arrays, then later use these results to give bounds on the
number of symbols in transversal-free row-Latin arrays and
transversal-free Latin arrays.


We call a symbol in an array $A$ a {\it singleton} if it occurs
exactly once in $A$ and a {\it clone} otherwise. We define $R_i(A)$
and $C_j(A)$ to be the set of symbols occurring in row $i$ and column
$j$ of $A$, respectively. Let $A(i\mid j)$ denote the array formed from
$A$ by deleting row $i$ and column $j$ and let $\Psi_{ij}(A)$ be
the set of symbols that appear in $A$ and not in $A(i\mid j)$.




\begin{lemma}\label{lem:upper-bound-from-larges}
  Let $A$ be a transversal-free array of order $n$. If $A(n\mid n)$
  has a transversal and if $|R_n(A) \cup C_n(A)| \geq (k+1)n-1$, then
  $A$ has at most $$\frac12(k^2 - 2k + 2)n^2 + \frac12(3k-2)n$$
  distinct symbols.
\end{lemma}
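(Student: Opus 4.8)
The plan is to fix a transversal $T$ of $A(n\mid n)$, use it to build a short family of ``near-transversals'' of $A$, each of which must contain a repeated symbol because $A$ has none, and then record the resulting symbol-coincidences as edges of a forest whose size bounds below the excess $n^2-s$, where $s$ denotes the number of distinct symbols of $A$. First I would relabel rows and columns $1,\dots,n-1$ so that $T$ becomes the main diagonal of $A(n\mid n)$; this changes neither $R_n(A)\cup C_n(A)$ nor the property of being transversal-free. Write $d_i=A_{ii}$ for $1\le i\le n-1$, so the $d_i$ are pairwise distinct. Since row $n$ and column $n$ occupy only $2n-1$ cells and share the cell $(n,n)$, we have $|R_n(A)\cup C_n(A)|\le 2n-1$, so the hypothesis forces $k\le 1$: the claim is vacuous for $k\ge 2$, and it remains to treat $k\in\{0,1\}$.

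Next I would define, for $1\le i\le n-1$, the cell set $P_i=\{(j,j):1\le j\le n-1,\ j\neq i\}\cup\{(i,n),(n,i)\}$, and also $P_0=\{(j,j):1\le j\le n-1\}\cup\{(n,n)\}$. Each $P_m$ meets every row and every column exactly once, so (as $A$ has no transversal) it contains two cells with the same symbol; since the $d_j$ are distinct, in $P_0$ this coincidence must involve $(n,n)$ --- giving $A_{nn}=d_g$ for some $g$ --- and in $P_i$ it must involve a cell of $\{(i,n),(n,i)\}$. For each $m$ I fix one such coincident pair and call it $e_m$; these are $n$ distinct pairs because each cell of row $n$ and each cell of column $n$ lies in at most one of them.

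For $k=0$ the target is $s\le n^2-n=\frac12(0-0+2)n^2+\frac12(0-2)n$, and I would obtain it by showing that the graph $H$ on the cells of $A$ with edge set $\{e_0,\dots,e_{n-1}\}$ is a forest. Granting this, $n=|E(H)|$, and since the two endpoints of each edge carry the same symbol, the subgraph of $H$ induced on the cells bearing any fixed symbol $x$ is a forest on $\mathrm{mult}(x)$ vertices, hence has at most $\mathrm{mult}(x)-1$ edges; summing over the $s$ symbols gives $n\le n^2-s$. Acyclicity follows because every cell outside the diagonal $\{(j,j):1\le j\le n-1\}$ lies in at most one $e_m$ and thus has degree at most $1$ in $H$, while no $e_m$ has both endpoints in that diagonal, so $H$ cannot contain a cycle.

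For $k=1$ we have $|R_n(A)\cup C_n(A)|=2n-1$, so the $2n-1$ symbols of row $n$ and column $n$ are pairwise distinct; in particular $A_{in}\neq A_{ni}$, so the coincidence in $P_i$ is between one of the cells $(i,n),(n,i)$, call it $w_i$, and a diagonal cell, forcing the symbol of $w_i$ into $\{d_1,\dots,d_{n-1}\}$. But the $n$ distinct cells $(n,n),w_1,\dots,w_{n-1}$ all lie in row $n$ or column $n$, so they carry $n$ distinct symbols, all belonging to the $(n-1)$-element set $\{d_1,\dots,d_{n-1}\}$ --- impossible. Hence no transversal-free array satisfies the hypotheses with $k=1$, so the bound holds vacuously there. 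I expect the main obstacle to be the acyclicity step when $k=0$: one must select the coincident pairs so that the $n$ recorded coincidences are genuinely independent (each one certifying a fresh unit of symbol-excess), and the argument leans on the fact that every cell of the last row and last column is re-used by at most one $e_m$; the $k=1$ case, once one notices that its hypotheses cannot hold, is a one-line pigeonhole.
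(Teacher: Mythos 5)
Your argument only covers integer values of $k$, and that is a fatal restriction. The step ``the hypothesis forces $k\le 1$: the claim is vacuous for $k\ge 2$, and it remains to treat $k\in\{0,1\}$'' silently assumes $k\in\mathbb{Z}$, but the lemma is stated for an arbitrary parameter $k$ and is applied in the paper (through Lemma~\ref{lem:clone-near-trans-implies-few-syms}) with $k=2\alpha-1=(3-\sqrt{5})/2\approx 0.38$ in Theorem~\ref{th:rowLatin-bound} and with $k=\alpha=2-\sqrt{2}\approx 0.59$ in Theorem~\ref{thm:main-result}. For such $k\in(0,1)$ the hypothesis $|R_n(A)\cup C_n(A)|\ge (k+1)n-1$ is perfectly satisfiable, and the claimed bound $\frac12\bigl((k-1)^2+1\bigr)n^2+\frac12(3k-2)n$ is quadratically smaller than $n^2-n$. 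Your construction extracts exactly one forced symbol repetition from each of the $n$ permutations $P_0,\dots,P_{n-1}$, so even granting the forest argument it can never certify more than $n$ units of excess, i.e.\ never prove anything stronger than $s\le n^2-n$; indeed your $k=0$ case does not use the hypothesis on $|R_n(A)\cup C_n(A)|$ at all. So the entire range of $k$ that the paper actually needs is untouched.

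For the record, the parts you did prove are sound: the forest argument gives $s\le n^2-n$, which is the lemma's bound at $k=0$, and your pigeonhole observation that the hypotheses are contradictory when $|R_n(A)\cup C_n(A)|=2n-1$ is correct (so the $k=1$ instance, and hence Corollary~\ref{cor:best-case}, is vacuously true). But the content of the lemma lives at fractional $k$, and there a linear number of repetitions cannot suffice. The paper's proof gets the quadratic saving differently: it fixes a near transversal $T$ avoiding the last row and column that \emph{minimises} the number of its symbols lying in $R_n(A)\cup C_n(A)$, deduces that at least $\lambda\ge kn$ symbols of $T$ (the ``large'' symbols) occur in the last row or column, and then shows via a swapping argument that for each pair of cells $(i,j)$, $(j,i)$ with $i$ among the first $\lambda$ rows and $i<j<n$, at most one of the two cells can carry a symbol not already represented; this kills roughly $\lambda n-\frac12\lambda^2$ cells as potential carriers of new symbols, which is where the term $\frac12(k^2-2k+2)n^2$ comes from. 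To repair your approach you would need a mechanism that produces on the order of $kn\cdot n$ independent coincidences, not $n$ of them.
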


\begin{proof}
Assume that $T$ is a near
transversal of $A$ that does not meet the last row or column and
minimises the number of symbols that it has from $R_n(A) \cup C_n(A)$.

We call a symbol \emph{large} if it appears in both $T$ and 
$R_n(A)\cup C_n(A)$ and \emph{small} otherwise.  Let $\lambda$ be the number
of large symbols.  Permute the first $n-1$ rows and columns of $A$ so
that $T$ is located along the main diagonal and all of the large
symbols of $T$ appear in the top $\lambda$ rows. For $1\le i<n$, note
that $A_{in}$ and $A_{ni}$ cannot be two different small
symbols. Otherwise, $\left(T \setminus \{(i,i,A_{ii})\}\right) \cup
\{(i,n,A_{in}),(n,i,A_{ni})\}$ would be a transversal of $A$. So there
are at most $n-1$ distinct small symbols in the last row and
column. Thus,
\begin{equation}\label{eq:bound-on-large-syms}
\lambda \geq |R_n(A) \cup C_n(A)|-(n-1) \geq (k+1)n-1-(n-1) = kn.
\end{equation}
 
We now define a subset $\Gamma$ of the entries of $A$ in which
each symbol in $A$ is represented exactly once. We populate
$\Gamma$ in three steps. First, $T \subseteq \Gamma$. Second, for
every small symbol $s$ that occurs in the last row or column, select one
such entry containing $s$ and add it to $\Gamma$. Finally, for
every symbol $s'$ in $A$ that does not appear in $T$ or in the last row
or column, select one entry with the symbol $s'$ and add it to
$\Gamma$.

We claim that if $(i,j)$ is in the top $\lambda$ rows of $A$ with $i<j<n$,
then at most one of $(i,j,A_{ij})$ and $(j,i,A_{j,i})$ can be in
$\Gamma$. Suppose otherwise, and consider 
\begin{equation}\label{e:wasP}
\big(T \setminus \big\{(i,i,A_{ii}),(j,j,A_{jj})\big\}\big) \cup
\big\{(i,j,A_{ij}),(j,i,A_{ji})\big\}.
\end{equation}
Note that the symbol $A_{ii}$ is contained in the last row or column
of $A$. By the definition of $\Gamma$, we know that $(i,j)$ and
$(j,i)$ do not have the same symbol and neither one shares a symbol
with any entry in $T$ or in the last row or column. So (\ref{e:wasP})
is a near transversal that contains fewer symbols in $R_n(A) \cup
C_n(A)$ than $T$, contradicting the choice of $T$. This implies that
within the first $\lambda$ rows and columns of $A(n\mid n)$, there are
at least
\begin{equation*}
(n-2)+(n-3)+\dots+(n-\lambda-1) = \lambda n - \frac{\lambda(\lambda+3)}{2} 
\end{equation*}
entries not contained in $\Gamma$.  Within the last row and column of $A$,
there are at most $n-1$ entries in $\Gamma$ (all containing small
symbols), so at least $n$ entries are not in $\Gamma$.  Thus,
the number of distinct symbols in $A$ is
\begin{equation}\label{eq:gamma-upper-1} 
|\Gamma| \leq n^2 - \left(\lambda n - \frac{\lambda(\lambda+3)}{2}\right) - n
= \frac12\lambda^2-\left(n-\frac32\right)\lambda+n(n-1).
\end{equation}
This quadratic in $\lambda$ decreases weakly on the integer points
in the interval $kn\le\lambda\le n-1$.
Given (\ref{eq:bound-on-large-syms}), we may substitute $\lambda=kn$
into (\ref{eq:gamma-upper-1}) to get the desired result.
\end{proof}


\begin{lemma}\label{lem:good-col}
 Let $A$ be an $n\times n$ array with $\beta n^2$ distinct symbols. If
 there are $d \geq 1$ clones in row $i$, then there is some clone
 $A_{ij}$ such that 
 $$|R_i(A) \cup C_j(A)| \geq |R_i(A)|+\frac{\beta n^2-(n-d)(n-1)-|R_i(A)|}{d}.$$
\end{lemma}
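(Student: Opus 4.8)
The plan is a short averaging argument over the clone-containing cells of row $i$. Write $J$ for the set of columns $j$ with $A_{ij}$ a clone, so $|J|=d$, while the remaining $n-d$ cells of row $i$ hold singletons. Since $|R_i(A)\cup C_j(A)| = |R_i(A)| + |C_j(A)\setminus R_i(A)|$, the conclusion is equivalent to exhibiting some $j\in J$ with $|C_j(A)\setminus R_i(A)| \ge \big(\beta n^2-(n-d)(n-1)-|R_i(A)|\big)/d$. By the pigeonhole principle (averaging over the $d$ columns of $J$), it is enough to establish the aggregate bound $\sum_{j\in J}|C_j(A)\setminus R_i(A)| \ge \beta n^2-(n-d)(n-1)-|R_i(A)|$.

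To obtain this, I would count the symbols of $A$ that are \emph{missed}, meaning they appear in $A$ but neither in row $i$ nor in any column of $J$; call this set $S$. Every occurrence in $A$ of a symbol of $S$ lies in one of the $n-1$ rows distinct from $i$ and in one of the $n-d$ columns outside $J$, hence inside a fixed $(n-1)\times(n-d)$ subarray of $A$; as each symbol of $S$ occurs at least once, $|S|\le (n-d)(n-1)$. Consequently $R_i(A)\cup\bigcup_{j\in J}C_j(A)$ accounts for at least $\beta n^2-(n-d)(n-1)$ of the $\beta n^2$ symbols of $A$, and combining this with the subadditivity estimate $\big|R_i(A)\cup\bigcup_{j\in J}C_j(A)\big| \le |R_i(A)| + \sum_{j\in J}|C_j(A)\setminus R_i(A)|$ yields the aggregate bound above, which finishes the proof.

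I do not anticipate a serious obstacle: the argument is essentially a double count. The only point requiring care is the bookkeeping around clones versus singletons, namely checking that a missed symbol really is confined to the stated $(n-1)\times(n-d)$ subarray (a missed symbol may well be a singleton, but its unique cell still lies there), and reading ``$d$ clones in row $i$'' as ``$d$ columns $j$ for which $A_{ij}$ is a clone'', so that precisely $n-d$ columns are excluded when bounding $|S|$.
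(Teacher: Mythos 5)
Your proof is correct and follows essentially the same counting-plus-pigeonhole argument as the paper: the paper picks a representative entry for each symbol outside $R_i(A)$ and notes at most $(n-d)(n-1)$ of these can lie outside the $d$ clone columns, which is the same bookkeeping as your bound $|S|\le(n-d)(n-1)$ on the missed symbols, followed by the same averaging over the $d$ columns.
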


\begin{proof}
We will endeavour to find a column $j$ such that $|C_j(A) \setminus R_i(A)|$
is large. Without loss of generality, assume that the rightmost $d$
columns of row $i$ contain clones. First, remove all
occurrences of the symbols in $R_i(A)$ from the array. Now, arbitrarily select
a representative entry for each of the remaining symbols in the
array. Note that there are no representatives in row $i$ and so there
are at most $(n-d)(n-1)$ representatives in the first $n-d$
columns. Of the original $\beta n^2$ symbols, at least
$\beta n^2-(n-d)(n-1)-|R_i(A)|$ must have their representative 
in the last $d$ columns. By the pigeon-hole principle, the desired clone $A_{ij}$
occurs in one of the last $d$ columns.
\end{proof}

Let $\mathcal{A}$ be some class of square arrays of symbols that has
the following two properties: (i)~if any row and column of an array in
$\mathcal{A}$ is deleted, the resulting array is in $\mathcal{A}$ and
(ii)~if in one entry of the array, the symbol is changed to a new
symbol that appears nowhere else in the array, then the resulting
array is in $\mathcal{A}$. Note that $\mathcal{L}$, the set of all
Latin arrays, and $\mathcal{R}$, the set of all row-Latin arrays, both
satisfy the requirements listed.

Let $\frac12\le \alpha\le 1$. Define $\mathcal{M}_{\mathcal{A}}(\alpha)$
to be the set of transversal-free arrays in $\mathcal{A}$ whose
ratio of number of distinct symbols to cells is at least $\alpha$.
Suppose that $\mathcal{M}_{\mathcal{A}}(\alpha)$ is non-empty.
Define $\mathcal{M}^*_{\mathcal{A}}(\alpha) \subseteq
\mathcal{M}_{\mathcal{A}}(\alpha)$ by the rule that if $A \in
\mathcal{M}^*_{\mathcal{A}}(\alpha)$, then no array in
$\mathcal{M}_{\mathcal{A}}(\alpha)$ has an order smaller than $A$ and
no array in $\mathcal{M}_{\mathcal{A}}(\alpha)$ of the same order as
$A$ contains more distinct symbols than $A$. 
For example, 
both $\mathcal{M}^*_{\mathcal{L}}(1/2)$  and $\mathcal{M}^*_{\mathcal{R}}(1/2)$ 
consist solely of the Latin squares of order $2$.
For the remainder of the section, we will bound the number of symbols
in arrays by examining properties of the arrays in
$\mathcal{M}^*_{\mathcal{A}}(\alpha)$.

\begin{lemma}\label{lem:uniquemin}
Let $A \in \mathcal{M}^*_{\mathcal{A}}(\alpha)$ be an array of order $n$.
If $A_{ij}$ is a singleton, then $|\Psi_{ij}(A)|>\alpha(2n-1)$ and
$R_i(A)$ (resp., $C_j(A)$) contains more than $(2\alpha-1)n$ symbols that 
appear only in row $i$ (resp., column $j$) of $A$.
\end{lemma}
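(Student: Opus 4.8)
The plan is to exploit the minimality of $A$ within $\mathcal{M}^*_{\mathcal{A}}(\alpha)$ by performing the two operations allowed on $\mathcal{A}$: deleting a row and column, and recolouring an entry to a fresh symbol. Suppose $A_{ij}$ is a singleton. First I would consider $A' = A(i\mid j)$, which lies in $\mathcal{A}$ by property~(i) and has order $n-1 < n$; hence $A' \notin \mathcal{M}_{\mathcal{A}}(\alpha)$, so either $A'$ has a transversal or its symbol-to-cell ratio is below $\alpha$. If $A'$ has a transversal $T'$, then since $A_{ij}$ is a singleton it does not appear in $A'$, so $T' \cup \{(i,j,A_{ij})\}$ is a transversal of $A$, contradicting $A \in \mathcal{M}^*_{\mathcal{A}}(\alpha) \subseteq \mathcal{M}_{\mathcal{A}}(\alpha)$. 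Therefore $A'$ has fewer than $\alpha(n-1)^2$ distinct symbols. The number of distinct symbols of $A$ is at most that of $A'$ plus $|\Psi_{ij}(A)|$, while $A$ itself has at least $\alpha n^2$ distinct symbols; comparing these gives $|\Psi_{ij}(A)| > \alpha n^2 - \alpha(n-1)^2 = \alpha(2n-1)$, which is the first claim.

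For the second claim, I would argue by contradiction that $R_i(A)$ contains at most $(2\alpha - 1)n$ symbols that appear only in row~$i$. The idea is to recolour the singleton entry $(i,j,A_{ij})$ with a brand-new symbol — this changes nothing, since $A_{ij}$ was already a singleton — and then observe more carefully what $\Psi_{ij}(A)$ consists of. A symbol lies in $\Psi_{ij}(A)$ exactly when every entry carrying it is either in row~$i$ or in column~$j$. Split such symbols into three types: those appearing only in row~$i$, those appearing only in column~$j$, and those appearing in both row~$i$ and column~$j$ (equivalently, in the cell $(i,j)$, or in row~$i$ and column~$j$ via separate cells). The count of the first type is at most the number of symbols appearing only in row~$i$, i.e.\ at most $(2\alpha-1)n$ by assumption; symmetrically for the second type; and the third type is governed by how many symbols of $R_i(A)$ also meet column~$j$. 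Bounding the third type by $\min(|R_i(A)|, |C_j(A)|) \le n$ would only give $|\Psi_{ij}(A)| \le 2(2\alpha-1)n + n = (4\alpha-1)n$, which already contradicts $|\Psi_{ij}(A)| > \alpha(2n-1) = 2\alpha n - \alpha$ only when $\alpha$ is close to $1$ — so a sharper accounting is needed.

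The sharper accounting is the main obstacle, and I expect it to hinge on the following: a symbol in $R_i(A) \cap C_j(A)$ that is a clone contributes to $\Psi_{ij}(A)$ only if all of its occurrences lie in row~$i$ or column~$j$, and the symbol $A_{ij}$ itself (now a fresh singleton) accounts for at most one such symbol. More usefully, one should delete row~$i$ and column~$j$ and track which symbols vanish: the symbols that survive in $A(i\mid j)$ number fewer than $\alpha(n-1)^2$, and among the $\alpha n^2$ symbols of $A$, those that vanish are precisely $\Psi_{ij}(A)$. Each symbol appearing only in row~$i$ vanishes, each appearing only in column~$j$ vanishes, and the rest of $\Psi_{ij}(A)$ consists of symbols with occurrences confined to row~$i \cup$ column~$j$ but meeting both; the number of such ``cross'' symbols is at most $|R_i(A)| - (\text{row-only count})$ and also at most $|C_j(A)| - (\text{column-only count})$. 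Writing $r$ for the number of row-only symbols and $c$ for the column-only symbols, one gets $|\Psi_{ij}(A)| \le r + c + \min\bigl(|R_i(A)| - r,\ |C_j(A)| - c\bigr) \le r + |C_j(A)| \le r + n$ (and symmetrically $\le c + n$). Combining $|\Psi_{ij}(A)| > \alpha(2n-1)$ with $|\Psi_{ij}(A)| \le r + n$ yields $r > \alpha(2n-1) - n = (2\alpha - 1)n - \alpha$; since $r$ is an integer this gives $r \ge (2\alpha-1)n$ when that quantity is not an integer, and a tiny extra push (using that $A_{ij}$ is a singleton, hence not one of the cross symbols, sharpening the $\min$ bound by $1$) closes the gap to the strict inequality $r > (2\alpha-1)n$. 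The symmetric argument handles $C_j(A)$, completing the proof.
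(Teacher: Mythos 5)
Your plan follows essentially the paper's own route. The first claim is proved identically: $A(i\mid j)$ is transversal-free (else a transversal of it extends through the singleton $(i,j,A_{ij})$), so by minimality of the order of $A$ it has fewer than $\alpha(n-1)^2$ distinct symbols, whence $|\Psi_{ij}(A)|>\alpha(2n-1)$. For the second claim, your ``sharpened accounting'' is, once untangled, exactly the paper's one-line count: every symbol of $\Psi_{ij}(A)$ other than those appearing only in row $i$ must occur in column $j$ in one of its $n-1$ cells other than $(i,j)$ (that cell holds the singleton $A_{ij}$, which itself appears only in row $i$), so at most $n-1$ symbols of $\Psi_{ij}(A)\setminus\{A_{ij}\}$ lie in $C_j(A)$ and at least $|\Psi_{ij}(A)|-(n-1)>\alpha(2n-1)-(n-1)\ge(2\alpha-1)n$ symbols appear only in row $i$, the last step using $\alpha\le1$. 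Your ``sharpening the $\min$ bound by $1$'' is precisely this $n-1$ versus $n$, and it alone finishes the proof: it turns your estimate into $r>\alpha(2n-1)-(n-1)=(2\alpha-1)n+(1-\alpha)\ge(2\alpha-1)n$. The contradiction framing and the recolouring of $(i,j)$ do no work.

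Two small corrections. The integrality remark is false and should be dropped: $r>(2\alpha-1)n-\alpha$ does not imply $r\ge(2\alpha-1)n$ just because $(2\alpha-1)n$ is not an integer (e.g.\ $(2\alpha-1)n=5.5$, $\alpha=0.9$ gives only $r\ge5$); fortunately it is not needed once the $-1$ sharpening is in place. Also, be consistent about where $A_{ij}$ sits in your trichotomy: in your second paragraph it is a ``cross'' symbol, in your third it is not. The clean choice is to count it among the symbols appearing only in row $i$ (its unique occurrence lies in row $i$), which is what legitimately reduces the bound on cross symbols to $|C_j(A)|-c-1\le n-c-1$ and hence yields the strict inequality.
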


\begin{proof}
Any array of order $1$ has a transversal, so $n \geq 2$. There is no 
transversal $T$ of $A(i\mid j)$, or else $T\cup\{(i,j,A_{ij})\}$ would be
a transversal of $A$. As 
$A\in\mathcal{M}^*_{\mathcal{A}}(\alpha)$, we have that 
$A(i\mid j) \not\in\mathcal{M}_{\mathcal{A}}(\alpha)$, 
so the number of distinct symbols in $A(i\mid j)$ is
strictly less than $\alpha(n-1)^2$. Thus,
$$|\Psi_{ij}(A)|>\alpha n^2-\alpha(n-1)^2= \alpha(2n-1).$$
At most $n-1$ of the symbols in $\Psi_{ij}(A) \setminus \{A_{ij}\}$
appear in $C_j(A)$, so at least 
$$|\Psi_{ij}(A)|-(n-1)>\alpha(2n-1)-(n-1) \geq (2\alpha-1)n$$ 
symbols appear in row $i$ and nowhere else in $A$. 
A similar argument applies to $C_j(A)$.
\end{proof}

\begin{lemma}\label{lem:clone-near-trans-implies-few-syms}
Let $A \in \mathcal{M}^*_{\mathcal{A}}(\alpha)$ be an array of order
$n$.  If $A_{ij}$ is a clone and $|R_i(A) \cup C_j(A)|\geq(k+1)n-1$, 
then $A$ has at most
$$\frac12(k^2 - 2k + 2)n^2 + \frac12(3k-2)n$$
distinct symbols.
\end{lemma}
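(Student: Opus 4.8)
The plan is to reduce this to Lemma~\ref{lem:upper-bound-from-larges}, whose conclusion is word-for-word the bound we want. To apply that lemma I would permute the rows and columns of $A$ so that the cell $(i,j)$ becomes $(n,n)$; this changes neither the transversals nor the symbol count, and it turns the hypothesis $|R_i(A)\cup C_j(A)|\ge(k+1)n-1$ into $|R_n(A)\cup C_n(A)|\ge(k+1)n-1$. Since $A\in\mathcal{M}^*_{\mathcal{A}}(\alpha)\subseteq\mathcal{M}_{\mathcal{A}}(\alpha)$, the array $A$ is transversal-free, so the only remaining hypothesis of Lemma~\ref{lem:upper-bound-from-larges} to verify is that $A(i\mid j)$ has a transversal.

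This is the one genuinely new step, and it is where the hypothesis that $A_{ij}$ is a clone is used. I would form the array $A'$ obtained from $A$ by recolouring cell $(i,j)$ with a brand-new symbol. By property~(ii) of $\mathcal{A}$, $A'\in\mathcal{A}$, and $A'$ has order $n$. Because $A_{ij}$ is a clone, its symbol still occurs elsewhere in $A'$, so $A'$ has exactly one more distinct symbol than $A$; as $A$ has at least $\alpha n^2$ distinct symbols, $A'$ has strictly more than $\alpha n^2$, hence symbol-to-cell ratio exceeding $\alpha$. Now $A'$ cannot be transversal-free: otherwise $A'\in\mathcal{M}_{\mathcal{A}}(\alpha)$ would be an array of the same order as $A$ with strictly more distinct symbols, contradicting $A\in\mathcal{M}^*_{\mathcal{A}}(\alpha)$. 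So fix a transversal $T$ of $A'$. If $T$ did not use the cell $(i,j)$, then $T$ would also be a transversal of $A$ (the two arrays agree off $(i,j)$), contradicting transversal-freeness of $A$; so $(i,j)\in T$, and $T\setminus\{(i,j,A'_{ij})\}$ is a set of $n-1$ entries of $A$ lying outside row $i$ and column $j$, no two agreeing in any coordinate, i.e.\ a transversal of $A(i\mid j)$.

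With $A(i\mid j)$ now known to have a transversal, Lemma~\ref{lem:upper-bound-from-larges} (applied after the relabelling above) immediately yields that $A$ has at most $\frac12(k^2-2k+2)n^2+\frac12(3k-2)n$ distinct symbols, completing the proof. I expect the only ``obstacle'' to be spotting the recolouring trick, which simultaneously exploits closure property~(ii) of $\mathcal{A}$ and the maximality built into $\mathcal{M}^*_{\mathcal{A}}(\alpha)$; everything after that is routine and essentially already carried out in the earlier lemma.
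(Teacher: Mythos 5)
Your proposal is correct and follows essentially the same route as the paper: recolour the clone cell with a fresh symbol, use maximality of $A$ in $\mathcal{M}^*_{\mathcal{A}}(\alpha)$ together with closure property~(ii) to force the recoloured array to have a transversal through that cell, obtain a near transversal of $A$ avoiding row $i$ and column $j$, and then invoke Lemma~\ref{lem:upper-bound-from-larges}. You simply spell out the steps (the permutation to $(n,n)$ and why the transversal must pass through the recoloured cell) that the paper leaves implicit.
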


\begin{proof}
  Without loss of generality, $i=j=n$.  Create $A'$ by changing the
  symbol in the $(n,n)$ cell of $A$ to a symbol that did not
  previously appear in $A$. Since $A_{ij}$ is a clone in $A$, we know
  that $A'$ contains strictly more symbols than $A$. Since $A \in
  \mathcal{M}^*_{\mathcal{A}}(\alpha)$, we conclude that $A'$ has a
  transversal, although $A$ does not. Hence there is a near
  transversal of $A$ that does not meet row $n$ or column $n$. By
  applying Lemma~\ref{lem:upper-bound-from-larges}, the result
  follows.
\end{proof}

In the best case, Lemma~\ref{lem:clone-near-trans-implies-few-syms}
falls just short of proving Conjecture~\ref{conj:ln}.

\begin{corollary}\label{cor:best-case}
 Let $A \in \mathcal{M}^*_{\mathcal{A}}(\alpha)$ be an array of order
 $n$.  If $A_{ij}$ is a clone and $|R_i(A) \cup C_j(A)| = 2n-1$, then
 $A$ has at most $(n^2+n)/2$ distinct symbols.
\end{corollary}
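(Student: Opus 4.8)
The plan is to recognise that this corollary is just the $k=1$ instance of Lemma~\ref{lem:clone-near-trans-implies-few-syms}. Indeed, the hypothesis $|R_i(A)\cup C_j(A)|=2n-1$ is of the form $(k+1)n-1$ precisely when $k=1$, and of course $2n-1\geq (k+1)n-1$ holds for $k=1$. So first I would simply invoke Lemma~\ref{lem:clone-near-trans-implies-few-syms} with $k=1$, which is legitimate since $A_{ij}$ is assumed to be a clone and $A\in\mathcal{M}^*_{\mathcal{A}}(\alpha)$.

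The only remaining task is to evaluate the bound $\frac12(k^2-2k+2)n^2+\frac12(3k-2)n$ at $k=1$: here $k^2-2k+2=1$ and $3k-2=1$, so the expression collapses to $\frac12 n^2+\frac12 n=(n^2+n)/2$, as claimed. There is no genuine obstacle; the content is entirely in the earlier lemma, and this corollary is a one-line specialisation recorded for emphasis (namely, that the best case of the lemma comes within a linear term of Conjecture~\ref{conj:ln}). I would keep the proof to a single sentence: "Apply Lemma~\ref{lem:clone-near-trans-implies-few-syms} with $k=1$."
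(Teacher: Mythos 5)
Your proposal is correct and matches the paper's (implicit) argument exactly: the corollary is simply Lemma~\ref{lem:clone-near-trans-implies-few-syms} specialised to $k=1$, where the bound evaluates to $(n^2+n)/2$. Nothing further is needed.
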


Lemmas~\ref{lem:good-col}, \ref{lem:uniquemin} and
\ref{lem:clone-near-trans-implies-few-syms} form the main framework
needed to bound the number of symbols. We will utilise
Lemmas~\ref{lem:good-col} and \ref{lem:uniquemin} in different ways to
find an entry $(i,j,k)$ where $k$ is a clone and row $i$ and column
$j$ contain many different symbols. We then apply
Lemma~\ref{lem:clone-near-trans-implies-few-syms} to bound the number
of symbols overall. The following subsections concentrate on specific
classes for~$\mathcal{A}$.

\subsection{Row-Latin Arrays}

In this subsection, we consider $\mathcal{A}=\mathcal{R}$, the set
of row-Latin arrays.

\begin{lemma}\label{lem:rowLatin-clone}
  Let $M \in \mathcal{M}^*_{\mathcal{R}}(\alpha)$ be a row-Latin array
  of order $n$. There exists a clone $M_{ij}$ for which $|R_i(M)
  \cup C_j(M)| \geq 2\alpha n-1$.
\end{lemma}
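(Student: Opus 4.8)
The goal is to locate a clone $M_{ij}$ whose row and column between them see at least $2\alpha n - 1$ symbols. The natural strategy is to pick a row $i$ that is as "symbol-rich" as possible, show that row has plenty of clones, and then feed that row to Lemma~\ref{lem:good-col} to extract a good column $j$. The reason to chase clones specifically (rather than any entry) is that Lemma~\ref{lem:clone-near-trans-implies-few-syms} — the payoff lemma downstream — needs a clone; so this lemma is the bridge that converts the counting estimates of Lemmas~\ref{lem:good-col} and~\ref{lem:uniquemin} into something usable.

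First I would set $\beta$ to be the exact symbol density of $M$, so $M$ has $\beta n^2$ symbols with $\beta \ge \alpha$. Since $M$ is row-Latin, every row has exactly $n$ symbols, and $M$ has at most $n^2$ symbols, each appearing at least once; the total count $\beta n^2$ forces the rows collectively to repeat symbols. More carefully: let $d_i$ be the number of clones in row $i$ (entries whose symbol appears more than once in $M$). I want to find a row with a useful lower bound on $d_i$ together with control on $|R_i(M)|$. The cleanest handle is to choose $i$ to minimise $|R_i(M)|$, or perhaps to choose the row containing a singleton and invoke Lemma~\ref{lem:uniquemin}: if $M_{ij}$ is a singleton then $R_i(M)$ contains more than $(2\alpha-1)n$ symbols appearing only in row $i$, so at most $n - (2\alpha-1)n = 2(1-\alpha)n$ symbols of row $i$ can be clones — i.e. $d_i \le 2(1-\alpha)n$, which is small when $\alpha$ is close to $1$. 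I would run the argument on a row that contains at least one clone and at least one singleton (generic rows will be of this type unless the array is degenerate, and the degenerate cases — a row that is all singletons, or all clones — can be handled directly or are impossible by a counting argument since $M$ has strictly fewer than $n^2$ symbols).

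With such a row $i$ fixed, apply Lemma~\ref{lem:good-col} with $d = d_i$: it yields a clone $M_{ij}$ with
\[
|R_i(M) \cup C_j(M)| \ge |R_i(M)| + \frac{\beta n^2 - (n - d_i)(n-1) - |R_i(M)|}{d_i}.
\]
Now substitute the bounds: $\beta \ge \alpha$, $|R_i(M)| = n$ (row-Latin!), and $d_i \le 2(1-\alpha)n$ from Lemma~\ref{lem:uniquemin}. The key observation is that the right-hand side is decreasing in $|R_i(M)|$ only through the $-|R_i(M)|/d_i$ term but increasing through the bare $|R_i(M)|$ term; since $d_i \ge 1$, the coefficient on $|R_i(M)|$ is $1 - 1/d_i \ge 0$, so using $|R_i(M)| = n$ is fine. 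Plugging in and simplifying the expression $n + (\alpha n^2 - (n-d)(n-1) - n)/d$ with $d = d_i$, one checks this is a decreasing function of $d$ on the relevant range, so the worst case is $d = d_i$ at its maximum $2(1-\alpha)n$; substituting that value should collapse the bound to exactly $2\alpha n - 1$ (the arithmetic is the routine part).

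**Main obstacle.** The delicate point is guaranteeing the existence of a row with *both* a clone (so $d_i \ge 1$, needed to apply Lemma~\ref{lem:good-col}) and a singleton (so Lemma~\ref{lem:uniquemin} bounds $d_i$ from above). If a row has no singleton at all, Lemma~\ref{lem:uniquemin} gives no upper bound on $d_i$ and the fraction in Lemma~\ref{lem:good-col} could be small; if a row has no clone, Lemma~\ref{lem:good-col} does not apply. I expect this is resolved by a short counting argument: since $M \in \mathcal{M}^*_{\mathcal{R}}(\alpha)$ is transversal-free it cannot have $n^2$ distinct symbols (it is not a "permutation-like" array), so there is at least one clone somewhere, hence at least one row with a clone; and if *that* row had no singleton it would consist entirely of clones, which one can rule out — e.g. every symbol in such a row reappears elsewhere, and iterating / counting against the global bound $\beta n^2 < n^2$ combined with Lemma~\ref{lem:uniquemin} applied to whichever rows *do* contain singletons should force a contradiction or else let us pick a better row. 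Pinning down this case analysis cleanly, rather than the algebra, is where the real work lies.
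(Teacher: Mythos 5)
Your reduction to Lemma~\ref{lem:good-col} has the monotonicity backwards, and this breaks the argument. With $|R_i(M)|=n$ (row-Latin) and $\beta\ge\alpha$, the bound of Lemma~\ref{lem:good-col} simplifies to
\[
|R_i(M)\cup C_j(M)| \;\ge\; 2n-1+\frac{(\alpha-1)n^2}{d},
\]
and since $\alpha<1$ this is an \emph{increasing} function of $d$: it yields $2\alpha n-1$ precisely when $d\ge n/2$, so what you need is a \emph{lower} bound on the number of clones available to Lemma~\ref{lem:good-col}, not an upper bound. The inequality $d_i\le 2(1-\alpha)n$ that you extract from Lemma~\ref{lem:uniquemin} is therefore of no use; a row containing a singleton and a single clone ($d_i=1$) is perfectly consistent with your setup, and there the right-hand side becomes $2n-1+(\alpha-1)n^2$, which is far below $2\alpha n-1$ for large $n$. (Also, substituting $d=2(1-\alpha)n$ does not ``collapse the bound to exactly $2\alpha n-1$''; it gives $\tfrac32 n-1$.) So even after you secure a row with both a clone and a singleton --- the issue you flag as the main obstacle --- the estimate does not close.

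The missing ingredients are exactly the ones that force $d$ to be large. The paper's case split is on columns, not rows: if some clone lies in a column that also contains a singleton, apply Lemma~\ref{lem:uniquemin} to that singleton to get more than $(2\alpha-1)n$ symbols appearing only in that column, whence $|R_i(M)\cup C_j(M)|\ge n+(2\alpha-1)n-1$ at once. Otherwise every column containing a clone consists entirely of clones; if there are $d\le n/2$ such columns, Drisko's theorem gives a partial transversal of length $d$ inside the $n\times d$ subarray formed by them, which extends to a transversal of $M$ using singletons, contradicting transversal-freeness. Hence $d>n/2$, so \emph{every} row has $d>n/2$ clones, and only now does Lemma~\ref{lem:good-col} deliver $|R_i(M)\cup C_j(M)|>2(\alpha-1)n+2n-1=2\alpha n-1$. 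Your proposal contains neither the column-based case analysis (in particular, it never applies Lemma~\ref{lem:uniquemin} to a column) nor the Drisko step, and without them the row chosen in your sketch can simply have too few clones for the counting to work.
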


\begin{proof}
First suppose that there is a clone $M_{ij}$ that appears in the
same column as a singleton.  By Lemma~\ref{lem:uniquemin},
$C_j(M)$ contains at least $(2\alpha - 1)n$ symbols that appear only
in $C_j(M)$. One of these symbols may be $M_{ij}$, but 
$$|R_i(M) \cup C_j(M)| 
= |R_i(M)| + |C_j(M) \setminus R_i(M)| \geq n + (2\alpha - 1)n-1 
= 2\alpha n-1,$$ 
as required.

Hence we may assume that no column contains a singleton and a
clone.  Let $d$ be the number of columns that contain clones.
 
If $d \leq n/2$, then we can find a transversal in the following
way. Let $R$ be the $n \times d$ subarray of $M$ that contains the
clones of $M$. A result of Drisko \cite{Dri98} implies that $M$
has a partial transversal of length $d$ that is wholly inside $R$.
Since this partial transversal covers all columns that contain
clones, it can trivially be extended to a transversal using
singletons.
 
So we may assume that $d > n/2$. Since each row contains $d$ clones, we
may use Lemma~\ref{lem:good-col} with $\beta\ge\alpha$
to find some clone $M_{ij}$ such that 
\[
|R_i(M) \cup C_j(M)| \geq \frac{\alpha-1}{d}n^2 + 2n - 1
> 2(\alpha-1)n+2n-1=2\alpha n-1. \qedhere
\]
\end{proof}

We now show one of our main results, that row-Latin arrays with
many symbols must have a transversal.

\begin{theorem}\label{th:rowLatin-bound}
 Let $L$ be a row-Latin array of order $n$. If $L$ has at least
 $\frac{1}{4}(5-\sqrt{5})n^2 \approx 0.6910n^2$ distinct symbols, then
 $L$ has a transversal.
\end{theorem}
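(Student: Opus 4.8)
The plan is to argue by contradiction: suppose $L$ is a row-Latin array of order $n$ with at least $\frac14(5-\sqrt5)n^2$ distinct symbols and no transversal. Since the hypothesis is monotone in the right direction, we may pass to an extremal counterexample; concretely, set $\alpha=\frac14(5-\sqrt5)$ (note $\frac12\le\alpha\le 1$) and assume $L\in\mathcal{M}^*_{\mathcal{R}}(\alpha)$, an array of smallest order and then most symbols among transversal-free row-Latin arrays with symbol-to-cell ratio at least $\alpha$. The existence of such an $L$ follows from the assumed counterexample together with property~(i) of $\mathcal{R}$ (deleting a row and column keeps us in $\mathcal{R}$), so $\mathcal{M}_{\mathcal{R}}(\alpha)$ is non-empty.

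Next I would feed $L$ into the machinery already assembled. By Lemma~\ref{lem:rowLatin-clone} there is a clone $L_{ij}$ with $|R_i(L)\cup C_j(L)|\ge 2\alpha n-1$. I want to apply Lemma~\ref{lem:clone-near-trans-implies-few-syms}, which requires a bound of the form $|R_i(L)\cup C_j(L)|\ge (k+1)n-1$; the natural (non-integer) choice is $k+1=2\alpha$, i.e.\ $k=2\alpha-1$. Since Lemma~\ref{lem:upper-bound-from-larges} (hence Lemma~\ref{lem:clone-near-trans-implies-few-syms}) is proved by substituting $\lambda=kn$ into the quadratic \eqref{eq:gamma-upper-1}, which decreases in $\lambda$ on the relevant range, the same substitution goes through with the real value $k=2\alpha-1$: we obtain that $L$ has at most $\frac12(k^2-2k+2)n^2+\frac12(3k-2)n$ distinct symbols with $k=2\alpha-1$. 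Comparing this with the assumed lower bound $\alpha n^2$ on the number of symbols yields, after dropping the lower-order $n$ term (which is negative for the relevant $k$, or handled by the same "decreases weakly on integer points" observation pushing any slack into lower order), the inequality $\alpha\le\frac12\bigl((2\alpha-1)^2-2(2\alpha-1)+2\bigr)=\frac12(4\alpha^2-8\alpha+5)$, i.e.\ $4\alpha^2-10\alpha+5\ge 0$.

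The quadratic $4\alpha^2-10\alpha+5$ has roots $\alpha=\frac{10\pm\sqrt{20}}{8}=\frac{5\pm\sqrt5}{4}$, and the smaller root is exactly $\frac14(5-\sqrt5)$. So for $\alpha=\frac14(5-\sqrt5)$ the inequality $4\alpha^2-10\alpha+5\ge 0$ holds with equality, and for any slightly larger $\alpha$ it fails — giving the contradiction for arrays strictly above the threshold, and the stated $\lceil\cdot\rceil$ bound after rounding. The one place that needs genuine care, and which I expect to be the main obstacle, is the bookkeeping of the linear-in-$n$ terms: the symbol count lower bound is $\lceil\alpha n^2\rceil$ (not $\alpha n^2$ exactly), Lemma~\ref{lem:rowLatin-clone} gives $2\alpha n-1$ rather than a clean multiple of $n$, and Lemma~\ref{lem:clone-near-trans-implies-few-syms} contributes its own $\frac12(3k-2)n$. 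Reconciling these to land precisely on $\ell_r(n)\le\lceil\frac14(5-\sqrt5)n^2\rceil$ for \emph{all} $n>1$ — rather than just asymptotically — is where the argument must be done carefully, likely by choosing $k$ as a function of $n$ so that $(k+1)n-1$ matches the integer bound $\lceil 2\alpha n\rceil-1$ exactly and then verifying the resulting quadratic inequality in $n$ directly. The small-order corrections only ever help (they make the symbol bound smaller), so this is routine but must be written out.
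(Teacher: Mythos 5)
Your proposal is correct and follows essentially the same route as the paper: pass to an extremal array $M\in\mathcal{M}^*_{\mathcal{R}}(\alpha)$, combine Lemma~\ref{lem:rowLatin-clone} with Lemma~\ref{lem:clone-near-trans-implies-few-syms} for $k=2\alpha-1$, and note that $\alpha=\frac{1}{4}(5-\sqrt{5})$ is the root at which the quadratic coefficient collapses to $\alpha$. The extra bookkeeping you anticipate is unnecessary: with $k=2\alpha-1$ the linear term is exactly $-\frac{1}{4}(3\sqrt{5}-5)m<0$, so $M$ has at most $\alpha m^2-\frac{1}{4}(3\sqrt{5}-5)m<\alpha m^2$ distinct symbols, which contradicts the hypothesis already at the threshold value itself (no perturbation of $\alpha$ or $n$-dependent choice of $k$ is needed, and the ceiling form of the bound is immediate because symbol counts are integers).
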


\begin{proof}
Aiming for a contradiction, suppose that
$L\in\mathcal{M}_\mathcal{R}(\alpha)$ for $\alpha=(5-\sqrt{5})/4$.
Then there exists $M \in \mathcal{M}^*_{\mathcal{R}}(\alpha)$.  Let
$M$ have order $m$. 
By Lemma~\ref{lem:rowLatin-clone}, there is a clone
$M_{ij}$ such that $|R_i(M) \cup C_j(M)| \geq 2\alpha m - 1$. By
Lemma~\ref{lem:clone-near-trans-implies-few-syms}, the number of
distinct symbols in $M$ is at most
\begin{equation*}\label{eq:rowLatin-poly}
\frac{1}{2}\left((2\alpha-1)^2-2(2\alpha-1)+2\right)m^2+
\frac{1}{2}\left(3(2\alpha-1)-2\right)m
=\alpha m^2-\frac14(3\sqrt{5}-5)m.
\end{equation*}
This contradicts the fact that $M$ has at least $\alpha m^2$ distinct
symbols, and we are done.
\end{proof}

\subsection{Latin Arrays}

In this subsection, we consider $\mathcal{A}=\mathcal{L}$, the set
of Latin arrays.

We call a Latin array $L$ of order $n$ \emph{focused} if every
singleton in $L$ occurs in a row or a column that contains only
singletons and $|\Psi_{ij}(L)|=2n-1$ for some $(i,j)$ (that is,
row $i$ and column $j$ contain only singletons). We deal with
focused and unfocused arrays separately.


For focused arrays we use the following simple adaptation
of a result of Woolbright~\cite{Woolbright78}. The original proof
was for Latin squares, but it works without change for Latin arrays
(in fact for row-Latin arrays, but we do not need that).

\begin{theorem}\label{th:partial}
  Let $L$ be an $n \times n$ Latin array and $0 \leq t < n$. 
  If $(n-t)^2 > t$, then $L$ has a partial transversal of
  length $t+1$.
\end{theorem}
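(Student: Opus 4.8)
The plan is to mimic Woolbright's greedy-plus-switching argument for Latin squares, checking at each step that nothing uses the column-Latin property in a way that fails for general Latin arrays (in fact the only thing we need is the row-Latin property, since a transversal is ``one symbol per row, per column, all distinct'', and the counting below will only invoke distinctness of symbols within a row together with the defining inequality $(n-t)^2>t$). Concretely, suppose for contradiction that $L$ has no partial transversal of length $t+1$, and let $P$ be a partial transversal of maximum length, so $|P|=s$ with $s\le t$; after permuting rows and columns assume $P$ occupies the diagonal cells $(1,1),\dots,(s,s)$. The remaining $n-s$ rows and $n-s$ columns form an $(n-s)\times(n-s)$ ``free'' subarray whose entries we want to use to lengthen $P$.

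First I would show the free subarray contains no symbol that is absent from $P$: if cell $(i,j)$ with $i,j>s$ contained such a new symbol we could simply append $(i,j,L_{ij})$ to $P$, contradicting maximality. Hence every one of the $(n-s)^2$ cells of the free subarray carries one of the $s$ symbols of $P$. Now I would run the standard augmenting argument: pick a symbol $c$ used by $P$, say $c=L_{11}$, and look at where $c$ appears in the free subarray. For each such appearance $(i,j)$ ($i,j>s$) we attempt the switch $P':=\big(P\setminus\{(1,1,c)\}\big)\cup\{(i,j,c)\}$; this is again a partial transversal of length $s$ unless it creates a row or column clash with $(1,1)$ — but $i,j>s$ rules that out — so the obstruction to extending $P'$ is that \emph{every} cell $(1,q)$ with $q>s$ or $(p,1)$ with $p>s$ must already carry a symbol of $P\setminus\{c\}$ (this is where maximality of $|P|$ is used again, now applied to $P'$). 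Iterating the switch over a cleverly chosen set of ``reachable'' diagonal cells yields, in Woolbright's fashion, that a large block of the free subarray is forced to repeat a small number of symbols; counting the forced repetitions against the row-Latin constraint gives $(n-s)^2\le s$, hence $(n-t)^2\le(n-s)^2\le s\le t$, contradicting the hypothesis $(n-t)^2>t$.

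More carefully, the cleanest route is the ``one symbol at a time'' version: show that for the maximum partial transversal $P$ of length $s\le t$, each of the $s$ symbols of $P$ occupies at most one cell of the free subarray in any fixed free column (indeed if symbol $c$ appeared in free cells $(i,j)$ and $(i',j)$ of the same free column we could switch one of them onto the diagonal and still have length $s$, then reason as above) — wait, that bound is too strong; instead the correct bookkeeping is Woolbright's: one builds a set $S$ of indices $\le s$ that are ``switchable'', argues $|S|$ is not too small, and shows the free cells in rows indexed outside a related set must all carry symbols with indices in $S$, producing at most $|S|\cdot(n-s)$ such cells on one hand and exactly $(n-s)^2$ on the other after the full reachability analysis. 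The upshot is the inequality $(n-s)^2\le s$.

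The main obstacle is getting the switching/reachability count exactly right — Woolbright's original estimate is stated for Latin squares and is slightly delicate — so the real work is verifying line-by-line that each switch preserves partial-transversal-ness using only that a transversal has distinct cells in distinct rows and columns (never needing symbols to be distinct down a column), and that the final counting inequality emerges as $(n-s)^2\le s$. Since $s\le t<n$ we have $n-s\ge n-t>0$, so $(n-s)^2\le s\le t$ forces $(n-t)^2\le t$, the negation of our hypothesis; this contradiction completes the proof. As the excerpt itself notes, this is ``a simple adaptation'' of Woolbright's proof, so beyond the bookkeeping no new idea is required.
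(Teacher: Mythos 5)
Your overall route is exactly the paper's: the paper offers no argument of its own for this theorem, it simply cites Woolbright and observes that his proof for Latin squares goes through without change for Latin arrays (indeed for row-Latin arrays, which matches your remark that only distinctness of symbols within rows is needed). Your framing --- take a maximum partial transversal $P$ of length $s\le t$, note that every cell of the $(n-s)\times(n-s)$ free subarray must carry a symbol of $P$, aim for Woolbright's inequality $(n-s)^2\le s$, then conclude $(n-t)^2\le(n-s)^2\le s\le t$, contradicting the hypothesis --- is the correct reduction, and the final monotonicity step is handled properly.

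The weakness is that the one genuinely nontrivial ingredient, the switching/reachability count that produces $(n-s)^2\le s$, is never actually carried out. You describe it only as ``in Woolbright's fashion'', concede that getting the count right is ``the real work'', and your single concrete attempt at the bookkeeping (each symbol of $P$ occurs at most once in a fixed free column) is retracted mid-argument --- rightly, since for a Latin array it is vacuously true by column-Latinness and in any case does not yield the quadratic bound. So, judged as a self-contained proof, there is a gap at precisely the heart of the theorem: what you have is an accurate plan plus an implicit appeal to Woolbright. Since the paper delegates exactly the same content to the citation \cite{Woolbright78}, your proposal is on a par with the paper's treatment provided that appeal is made explicit (quote the key fact: if the longest partial transversal of a row-Latin array of order $n$ has length $s$, then $(n-s)^2\le s$), rather than half-reconstructing his argument; as written, the reconstruction would not survive the line-by-line verification you yourself defer.
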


In the following result, recall that we assume $\alpha\ge1/2$.

\begin{lemma}\label{lem:focused}
  Let $M \in \mathcal{M}^*_{\mathcal{L}}(\alpha)$ be a Latin array of
  order $n$. If $M$ is focused, then $M$ contains at most
  $\frac{1}{8}({6-\sqrt{2}})n^2 \approx 0.5732n^2$ distinct symbols.
\end{lemma}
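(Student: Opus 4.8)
The plan is to pin down a singleton $M_{ij}$ whose defining cell sits in a row $i$ and column $j$ that consist entirely of singletons (such a cell exists because $M$ is focused), and then extract a large partial transversal of the remaining array $M(i\mid j)$ by Theorem~\ref{th:partial}, which we can complete to a transversal of $M$ if that partial transversal is too long. First I would set $t$ to be the largest integer with $(n-1-t)^2 > t$, so Theorem~\ref{th:partial} applied to the Latin array $M(i\mid j)$ of order $n-1$ yields a partial transversal $T'$ of $M(i\mid j)$ of length $t+1$. If $t+1 = n-1$ this already contradicts $M$ being transversal-free, since $T' \cup \{(i,j,M_{ij})\}$ would be a transversal of $M$; so $t+1 \le n-2$, which gives us roughly $t \approx n - \sqrt n$, i.e.\ $n - 1 - t = O(\sqrt n)$.

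The heart of the argument is then a symbol-counting bound. Since row $i$ and column $j$ contain only singletons, they contribute exactly $2n-1$ distinct symbols, all of which appear nowhere else. The partial transversal $T'$ of length $t+1$ lives in $M(i\mid j)$ and misses exactly $n-1-t$ of its rows and $n-1-t$ of its columns; let $R$ be the union of those missed rows and columns together with row $i$ and column $j$ of $M$. Every symbol of $M$ that is \emph{not} among the $2n-1$ symbols of row $i$/column $j$ must, I claim, occur inside $R$ — because a symbol occurring only in the $(t+1)\times(t+1)$-ish block indexed by the rows and columns hit by $T'$, together with the fact that those rows/columns already contain the $t+1$ distinct symbols of $T'$, lets us swap: if some symbol $s$ avoided $R$ entirely, pick an occurrence $(a,b,s)$; then $(a,b)$ lies in a row and column both met by $T'$, so removing the (at most two) entries of $T'$ in that row and column and inserting $(a,b,s)$ plus the singleton $M_{ij}$ and, using the freed-up row and column and the singleton-rich row $i$/column $j$, one rebuilds a transversal of $M$. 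Counting cells in $R$: it has $n + n + (n-1-t)n + (n-1-t)n$ cells minus overlaps, i.e.\ $O((n-1-t)\,n)$ cells, hence $O((n-1-t)n)$ distinct symbols outside the $2n-1$ from row $i$/column $j$. Altogether the number of distinct symbols of $M$ is at most $(2n-1) + c\,(n-1-t)\,n$ for an explicit constant $c$; substituting $n-1-t \le \sqrt{t}+1 \le \sqrt{n}+1$ and optimising the constants yields the bound $\frac18(6-\sqrt2)n^2$.

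The main obstacle is the swapping/counting step: one must be careful that the transversal-rebuilding really goes through, because a single entry of $T'$ can be destroyed in only one row-or-column direction, and one must make sure the replacement entry $(a,b,s)$, the singleton $M_{ij}$, and the entries used from the untouched rows and columns (including the all-singleton row $i$ and column $j$) genuinely form a transversal with no repeated symbol — this is where the hypothesis that $M$ is focused (so that row $i$ and column $j$ are \emph{entirely} singletons, not just $M_{ij}$) does the real work. A secondary bookkeeping point is getting the inclusion–exclusion for $|R|$ exactly right and then carrying the $\sqrt{\cdot}$ through the quadratic optimisation to land precisely on $\frac18(6-\sqrt2)$; I expect the clean way is to write the symbol count as a function of $\theta := (n-1-t)/n$, note $\theta^2 n \ge (n-1-t) - O(1)$ forces $\theta \ge 1/\sqrt n - o(1)$ is \emph{not} what we want — rather $\theta = O(1/\sqrt n)$ — and then the dominant term is $2n^2\theta$ against the $n^2$ scale, so in fact the bound is driven by balancing the $2n - 1$ term against $2(n-1-t)n$, giving the stated constant after setting $n - 1 - t$ to its extremal value.
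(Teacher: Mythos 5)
Your central counting step does not work, and the shape of your bound cannot be right. The claim that every symbol outside the all‑singleton cross must occur in $R$ (the rows and columns missed by $T'$, together with row $i$ and column $j$) is unsupported: your justification is a swap argument ending in ``one rebuilds a transversal of $M$'', but after deleting the at most two entries of $T'$ in row $a$ and column $b$ and inserting $(a,b,s)$ and $(i,j,M_{ij})$, you have only a partial transversal of length about $t+1\approx n-\sqrt{n}$. The roughly $\sqrt{n}$ rows and columns missed by $T'$ are still uncovered, row $i$ and column $j$ are already consumed by $(i,j,M_{ij})$, and nothing guarantees usable (let alone singleton) cells in the missed rows and columns; since $M$ has no transversal at all, no completion exists and no contradiction can be extracted, so the claim that all symbols live in $R$ is simply not proved. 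Relatedly, your final count $(2n-1)+c\,(n-1-t)\,n$ with $n-1-t=\Theta(\sqrt{n})$ is $O(n^{3/2})$, which is asymptotically far below $\tfrac18(6-\sqrt2)n^2$; every array in $\mathcal{M}^*_{\mathcal{L}}(\alpha)$ has at least $\alpha n^2\ge n^2/2$ symbols, so a bound of that order would be an outright impossibility for large $n$, and no ``optimisation of constants'' in an $O(n^{3/2})$ expression can produce the quadratic constant $\tfrac18(6-\sqrt2)$. (Also, your deduction that $t+1\le n-2$ is vacuous: $t$ is determined by $n$ alone and is already about $n-\sqrt{n}$, so transversal-freeness is not being used there.)

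The missing idea is the minimality of $M$, which your argument never invokes. The paper's proof uses Lemma~\ref{lem:uniquemin} to show that every row or column containing a singleton contains more than $(2\alpha-1)n$ symbols unique to that line; hence, with $\delta=\lceil(2\alpha-1)n\rceil$, at least $\delta$ rows and $\delta$ columns contain singletons, and after permuting, the singletons lie in a frame occupying the first rows and columns (a separate short argument rules out $\alpha\ge 3/4$). A partial transversal of length $n-2\delta$ inside the clone core $M'$ (the last $n-\delta$ rows and columns) would extend to a transversal of $M$ using singletons from the frame, so no such partial transversal exists; applying Theorem~\ref{th:partial} to $M'$ in contrapositive gives $(\delta+1)^2\le n-2\delta-1$, hence $(2\alpha-1)^2n^2+(8\alpha-5)n+2\le 0$, and the constant $\tfrac18(6-\sqrt2)$ emerges from the discriminant of this quadratic combined with $\alpha<3/4$. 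So Theorem~\ref{th:partial} is indeed the right tool, but it must be applied to the core in the negative direction to constrain the frame thickness $\delta$ (equivalently $\alpha$), rather than applied positively around a single all‑singleton cross to attempt a direct symbol count.
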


\begin{proof}
Let $\delta = \left\lceil(2\alpha-1)n\right\rceil$.  
Suppose $M$ has $r$ rows and $c$ columns that contain singletons.
Permute the rows and columns of $M$ so that these singletons 
occur in the top $r$ rows and leftmost $c$ columns. Since $M$ is
focused, $\min(r,c) \geq 1$ and the bottom-right
$(n-r)\times(n-c)$ subarray does not contain any singletons. Thus,
if we consider any singleton in the last row or last column, we get
$\min(r,c) \geq \delta$ by Lemma~\ref{lem:uniquemin}.

If $\alpha \geq 3/4$, then $\min(r,c) \geq n/2$ and so
$\left\{(i,n-i+1) : 1 \leq i \leq n\right\}$ is a set of cells
containing only singletons, contradicting the fact that $M$ has no
transversal. So $\alpha < 3/4$.

Let $M'$ be the
subarray formed by the last $n-\delta$ rows and columns of $M$.
Suppose that $M$ has a partial transversal of length $n-2\delta$
wholly inside $M'$.  Then this partial transversal can easily be
extended to a transversal by selecting singletons in the first
$\delta$ rows and $\delta$ columns of $M$.  By assumption $M$ has no
transversal, so applying Theorem~\ref{th:partial} to $M'$ we find that
$(\delta+1)^2\le n-2\delta-1$. Hence 
\begin{equation}\label{e:quad}
0\ge\delta^2+4\delta+2-n\ge(2\alpha-1)^2n^2+(8\alpha-5)n+2.
\end{equation}
From the discriminant of this quadratic we learn that
$32\alpha^2-48\alpha+17\ge0$. Since $\alpha<3/4$
we have $\alpha\le({6-\sqrt{2}})/{8}$.
\end{proof} 

For any $\alpha>1/2$, it is worth noting that 
(\ref{e:quad}) fails for all large $n$. So we get an asymptotic version
of Conjecture~\ref{conj:ln} holding for focused Latin arrays. We are not able
to reach such a strong conclusion for the unfocused case.

\begin{lemma}\label{lem:entrychoice}
Let $M \in \mathcal{M}^*_{\mathcal{L}}(\alpha)$ be a Latin array of order $n$. 
If $M$ is unfocused, then there exists some clone $M_{ij}$ such that
$\left|R_i(M) \cup C_j(M) \right| \geq (\alpha+1)n-1.$
\end{lemma}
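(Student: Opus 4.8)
The plan is to exploit the fact that $M$ is \emph{unfocused} to locate a clone living in a row or column with many distinct symbols, then convert that into the claimed bound $|R_i(M)\cup C_j(M)|\ge(\alpha+1)n-1$. Since $M$ is unfocused, at least one of the two defining conditions of ``focused'' fails: either there is a singleton $M_{ij}$ whose row $i$ and column $j$ each contain a clone, or else every $(a,b)$ has $|\Psi_{ab}(M)|<2n-1$ (equivalently, no pair consisting of an all-singleton row and an all-singleton column exists). I would split into these two cases.

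In the first case, suppose $M_{ij}$ is a singleton but row $i$ contains a clone, say $M_{ij'}$ is a clone. By Lemma~\ref{lem:uniquemin} applied to the singleton $M_{ij}$, the row $R_i(M)$ contains more than $(2\alpha-1)n$ symbols appearing only in row $i$; these symbols are certainly not in $C_{j'}(M)$ (a column is row-Latin-independent of row $i$ except possibly through the single shared cell), so $|R_i(M)\cup C_{j'}(M)|\ge |R_i(M)|+|C_{j'}(M)|-1$ and, since a Latin array has $|R_i(M)|=|C_{j'}(M)|=n$, we would even get $2n-1$, which is stronger than $(\alpha+1)n-1$ for $\alpha\le1$. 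So in this case we are immediately done with the clone $M_{ij'}$. (A symmetric argument handles a clone in column $j$.) The slightly subtle point is ensuring that such a clone genuinely exists when ``focused'' fails via the first condition --- this is exactly the negation of ``every singleton occurs in an all-singleton row or column,'' so some singleton has a clone in its row or in its column, giving the clone we need.

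In the remaining case, every cell satisfies $|\Psi_{ab}(M)|\le 2n-2$, so there is no all-singleton-row/all-singleton-column pair; combined with the first condition holding, one should be able to argue that $M$ in fact has ``few'' singletons or that singletons are confined in a way that forces many clones into some row. The cleanest route is to count: if a row $i$ contains $d\ge1$ clones, apply Lemma~\ref{lem:good-col} with $\beta\ge\alpha$ to get a clone $M_{ij}$ with
\[
|R_i(M)\cup C_j(M)|\ge |R_i(M)|+\frac{\alpha n^2-(n-d)(n-1)-|R_i(M)|}{d}
= n+\frac{\alpha n^2-(n-d)(n-1)-n}{d}.
\]
I expect the main obstacle to be controlling the right-hand side: it is decreasing in $d$ in the relevant range, so the worst case is $d$ large, and plugging in $d=n$ gives $|R_i(M)\cup C_j(M)|\ge \alpha n^2/n - \dots$, which needs to be shown to be at least $(\alpha+1)n-1$. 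This requires knowing that $d$ is not too close to $n$, or alternatively ruling out large $d$ by a Drisko-type argument as in Lemma~\ref{lem:rowLatin-clone}: if too many rows have too many clones we find a partial transversal through the clones that extends (using that the non-clone cells are singletons and the array is Latin, hence column-Latin as well) to a full transversal, contradicting $M\in\mathcal{M}^*_{\mathcal{L}}(\alpha)$. So the real work is a careful case analysis on the number $d$ of clones per row, using the focused/unfocused dichotomy to guarantee $d\ge1$ for a suitable row, and using Theorem~\ref{th:partial} or Drisko's theorem to cap $d$ from above; the arithmetic then has to be checked to land exactly at the bound $(\alpha+1)n-1$.
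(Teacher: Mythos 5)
Your proposal has two genuine gaps, one in each case. In your first case, the step $|R_i(M)\cup C_{j'}(M)|\ge |R_i(M)|+|C_{j'}(M)|-1=2n-1$ is false: a row and a column of a Latin array can share many symbols (in a Latin square they share all $n$), so the sets $R_i(M)$ and $C_{j'}(M)$ are not "independent except through the shared cell". What Lemma~\ref{lem:uniquemin} actually yields from a clone sharing a row with a singleton is only $|R_i(M)\cup C_{j'}(M)| > n+(2\alpha-1)n=2\alpha n$, and since $\alpha<1$ this is strictly weaker than the required $(\alpha+1)n-1$; it is exactly the bound used for row-Latin arrays in Lemma~\ref{lem:rowLatin-clone}, which is why Theorem~\ref{th:rowLatin-bound} is weaker than Theorem~\ref{thm:main-result}. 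The paper instead exploits the full strength of unfocusedness: there is a singleton $M_{ik}$ with a clone in \emph{both} its row and its column, and $|\Psi_{ik}(M)|>\alpha(2n-1)$; since every symbol of $\Psi_{ik}(M)$ lies in $R_i(M)$ or $C_k(M)$, one of the two (say the row, WLOG) contains more than $\alpha n$ of them, and for a clone $M_{ij}$ in that row at most one symbol of $C_j(M)$ (namely $M_{ij}$) can lie in $\Psi_{ik}(M)$, giving $|R_i(M)\cup C_j(M)|\ge n+\alpha n-1$. Your argument never uses the column clone, so it cannot recover this.

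In your second case the monotonicity is backwards. For a Latin array ($|R_i(M)|=n$) the bound of Lemma~\ref{lem:good-col} simplifies to $2n-1-(1-\alpha)n^2/d$, which \emph{increases} with $d$ and equals $(\alpha+1)n-1$ only at $d=n$. So small $d$, not large $d$, is the obstruction, and capping $d$ from above by a Drisko-type argument (which in the paper appears only in the row-Latin Lemma~\ref{lem:rowLatin-clone}) would not help. The paper's actual case split is: if some row or column consists entirely of clones, apply Lemma~\ref{lem:good-col} with $d=n$ to get $(\alpha+1)n-1$ directly; otherwise every row and column contains a singleton, and then unfocusedness forces a singleton with clones in both its row and its column (if every singleton lay in an all-singleton line, the presence of a singleton in every row and column would produce an all-singleton row and an all-singleton column, making $M$ focused), after which the $\Psi_{ik}$ argument above applies. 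Your sketch leaves precisely these two steps ("one should be able to argue\dots", "the arithmetic then has to be checked") unproved, and as written neither case reaches the stated bound.
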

\begin{proof} 
Firstly, we consider the case that $M$ has some row or column that
contains only clones. Without loss of generality, row $i$ contains
only clones. By Lemma~\ref{lem:good-col}, there is some clone
$M_{ij}$ such that $|R_i(M) \cup C_j(M)| \geq n+(\alpha n^2 - n)/{n}
=(\alpha+1)n-1$.

Secondly, we consider the case that every row and column of $M$
contains a singleton. Since $M$ is unfocused, there is some
singleton $M_{ik}$ such that
there is a clone in both row $i$ and column $k$. By
Lemma~\ref{lem:uniquemin}, we have $|\Psi_{ik}(M)|>\alpha(2n-1)$. Each
symbol in $\Psi_{ik}(M)$ appears in either $R_i(M)$ or $C_k(M)$. Also,
$M_{ik}$ appears in both $R_i(M)$ and $C_k(M)$, so without loss of
generality, $R_i(M)$ contains at least $\left(|\Psi_{ik}(M)|+1\right)/2 >
\alpha(n-1/2)+1/2\geq \alpha n$ symbols that are in $\Psi_{ik}(M)$.  Let
$M_{ij}$ be a clone in the same row as $M_{ik}$.  Except 
possibly for $M_{ij}$, none of the $n$ symbols in $C_j(M)$ are in
$\Psi_{ik}(M)$. Hence, $|R_i(M) \cup C_j(M)|\geq \alpha n+n-1$ as required.
\end{proof}

We now show a stronger result than Theorem~\ref{th:rowLatin-bound}
holds for Latin arrays.

\begin{theorem}\label{thm:main-result}
  Let $L$ be a Latin array of order $n$. If $L$ has at least
  $\left(2-\sqrt{2}\right)n^2 \approx 0.5858n^2$ distinct symbols, 
  then $L$ has a transversal.
\end{theorem}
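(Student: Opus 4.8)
The plan is to run the same contradiction machine used for Theorem~\ref{th:rowLatin-bound}, but feeding in the sharper clone location from the Latin-array subsection and splitting on the focused/unfocused dichotomy. Suppose $L\in\mathcal{M}_{\mathcal{L}}(\alpha)$ for $\alpha=2-\sqrt{2}\approx0.5858$; then $\mathcal{M}^*_{\mathcal{L}}(\alpha)$ is non-empty, so pick $M\in\mathcal{M}^*_{\mathcal{L}}(\alpha)$ of order $n$. Since $\alpha\ge1/2$ we are entitled to use all the lemmas of the section. If $M$ is focused, then Lemma~\ref{lem:focused} bounds its symbol count by $\frac18(6-\sqrt2)n^2\approx0.5732n^2<\alpha n^2$, immediately contradicting $M\in\mathcal{M}_{\mathcal{L}}(\alpha)$. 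So we may assume $M$ is unfocused.

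For the unfocused case, apply Lemma~\ref{lem:entrychoice} to obtain a clone $M_{ij}$ with $|R_i(M)\cup C_j(M)|\ge(\alpha+1)n-1$. Writing this in the form $(k+1)n-1$ with $k=\alpha$, Lemma~\ref{lem:clone-near-trans-implies-few-syms} then says $M$ has at most
\begin{equation*}
\tfrac12(\alpha^2-2\alpha+2)n^2+\tfrac12(3\alpha-2)n
\end{equation*}
distinct symbols. Comparing with the assumed lower bound $\alpha n^2$, the leading-order inequality needed for a contradiction is $\tfrac12(\alpha^2-2\alpha+2)\le\alpha$, i.e. $\alpha^2-4\alpha+2\le0$, whose relevant root is $\alpha=2-\sqrt2$. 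At exactly this value of $\alpha$ the quadratic in $n^2$ vanishes, so the contradiction has to come from the linear term: with $\alpha=2-\sqrt2$ we have $3\alpha-2=4-3\sqrt2<0$, so the bound from Lemma~\ref{lem:clone-near-trans-implies-few-syms} is $\alpha n^2+\tfrac12(4-3\sqrt2)n<\alpha n^2$, which is the required contradiction.

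One subtlety to handle carefully is that Lemma~\ref{lem:clone-near-trans-implies-few-syms} is stated with an integer $k$ in the hypothesis "$|R_i\cup C_j|\ge(k+1)n-1$", whereas $\alpha$ is irrational; but the lemma's proof only uses $k$ as a real lower bound parameter via the substitution $\lambda\ge kn$ into the quadratic (\ref{eq:gamma-upper-1}), and that quadratic is weakly decreasing on $kn\le\lambda\le n-1$, so it goes through verbatim with $k=\alpha$ — I would just remark that this non-integer use is legitimate (or, equivalently, invoke the bound $|R_i\cup C_j|\ge\lceil(\alpha+1)n-1\rceil$ and absorb the rounding into the linear term, which is negative with room to spare). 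I expect the main obstacle to be purely bookkeeping: making sure the focused bound $\frac18(6-\sqrt2)n^2$ is genuinely below $(2-\sqrt2)n^2$ (it is, since $\frac{6-\sqrt2}{8}\approx0.573<0.586$) and that the linear slack in the unfocused case has the right sign at the critical $\alpha$, rather than anything conceptually hard — the real work was already done in the preceding lemmas.
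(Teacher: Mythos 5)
Your proposal is correct and is essentially identical to the paper's own proof: minimal counterexample in $\mathcal{M}^*_{\mathcal{L}}(\alpha)$ with $\alpha=2-\sqrt{2}$, ruling out the focused case via Lemma~\ref{lem:focused}, then combining Lemma~\ref{lem:entrychoice} with Lemma~\ref{lem:clone-near-trans-implies-few-syms} so that the negative linear term $\frac12(3\alpha-2)m$ yields the contradiction. Your remark that $k$ is used only as a real lower-bound parameter in the quadratic is a fair point of care that the paper passes over silently.
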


\begin{proof}
Aiming for a contradiction, suppose that
$L\in\mathcal{M}_\mathcal{L}(\alpha)$ for $\alpha=2-\sqrt{2}$.
Then there exists $M \in \mathcal{M}^*_{\mathcal{R}}(\alpha)$.  Let
$M$ have order $m$. Note that $M$ cannot be focused, by
Lemma~\ref{lem:focused}.
So, by Lemma~\ref{lem:entrychoice}, there is a clone $M_{ij}$ such that
$\left|R_i(M) \cup C_j(M) \right| \geq (\alpha+1)m-1$.
By Lemma~\ref{lem:clone-near-trans-implies-few-syms}, the number of
distinct symbols in $M$ is at most
\begin{equation*}\label{eq:Latin-poly}
\frac{1}{2}\left(\alpha^2-2\alpha+2\right)m^2+
\frac{1}{2}\left(3\alpha-2\right)m
=\alpha m^2-\frac12(3\sqrt{2}-4)m.
\end{equation*}
This contradicts the fact that $M$ has at least $\alpha m^2$ distinct symbols,
and we are done.
\end{proof}

\section{Small Values}
\label{sec:small-values}

We now shift our attention to small values of $n$ where we can compute
$\ell(n)$ exactly. Akbari and Alipour \cite{AkbariAlipour04} 
determined $\ell(n)$ for $n \leq 4$. We extend this search to $n\le7$
and catalogue all Latin arrays of small orders with no
transversals. For $n\ge 8$, computing $\ell(n)$ seems challenging.
We will mention a couple of unsuccessful attempts to find
examples that would provide some insight.



Following \cite{EW16}, we say that two Latin arrays are {\it
  trisotopic} if one can be changed into the other by permuting rows,
permuting columns, permuting symbols and/or transposing.  The set of
all Latin arrays trisotopic to a given array is a \emph{trisotopy
  class}. The number of transversals is a trisotopy class invariant,
so to find all transversal-free Latin arrays of a given order it
suffices to consider trisotopy class representatives. However, for orders
$n>5$ it becomes difficult to construct a representative of every
trisotopy class.  The following method allows us to push our results a
couple of orders further.

Let $L$ be a transversal-free Latin array. In the first two rows of
$L$, select two entries that do not share a column or symbol (this can
always be done for $n \geq 3$). Without loss of generality, we may
assume that these two entries are $(1,1,x)$ and $(2,2,y)$. Let $L'$ be
the bottom-right $(n-2) \times (n-2)$ subarray of $L$ where all
occurrences of $x$ and $y$ are replaced with a hole (that is, a cell
with no symbol; we forbid holes from being chosen in a transversal or
partial transversal). There cannot be a partial transversal of length
$n-2$ in $L'$, otherwise the corresponding entries in $L$, together with
$(1,1,x)$ and $(2,2,y)$, would form a transversal of~$L$.

Thus, to search for transversal-free Latin arrays of order $n$, we
first build a catalogue $\mathcal{C}_{n-2}$ of trisotopy class
representatives of transversal-free partial Latin arrays of order $n-2$
with at most two holes in each row and each column. Starting with this
catalogue, we can reverse the argument above. At least one
representative of each trisotopy class of transversal-free Latin array
of order $n$ can be obtained by taking an element of
$\mathcal{C}_{n-2}$, filling its holes with $x$ and $y$, then
extending it to a Latin array of order $n$.

By the above technique we are able to give a complete catalogue of the
transversal-free trisotopy classes for orders $n \leq
7$. Table~\ref{tab:l_n} gives the value of $\ell(n)$ and the number of
trisotopy classes with a specific number of symbols.

\begin{table}\centering
\begin{tabular}{cc@{\hspace{1cm}}ccccc}
\toprule
 & ~& \multicolumn{4}{c}{Trisotopy Classes}\\
 \cmidrule{3-6}
$n$&$\ell(n)$&$n$ symbols&$n+1$ symbols&$n+2$ symbols&Total\\
\midrule
2 & 3 & 1 &  - & - &  1 \\
3 & 3 & - &  - & - &  0 \\
4 & 6 & 1 &  1 & - &  2 \\
5 & 7 & - &  2 & - &  2 \\
6 & 9 & 8 & 19 & 1 & 28 \\
7 & 7 & - &  - & - &  0\\
\bottomrule
\end{tabular}
\caption{\label{tab:l_n}Values of $\ell(n)$ and the number of 
  trisotopy classes of transversal-free Latin arrays.}
\end{table}

Representatives of the trisotopy classes of transversal-free Latin arrays
of orders $4$ and $5$ are:
\[
\left(\begin{array}{cccc}
\tt a&\tt b&\tt c&\tt d\\
\tt b&\tt c&\tt d&\tt a\\
\tt c&\tt d&\tt a&\tt b\\
\tt d&\tt a&\tt b&\tt c\\
\end{array}\right),\quad
\left(\begin{array}{cccc}
\tt a&\tt b&\tt c&\tt d\\
\tt b&\tt c&\tt a&\tt e\\
\tt c&\tt a&\tt d&\tt b\\
\tt e&\tt d&\tt b&\tt a\\
\end{array}\right),\quad
\left(\begin{array}{ccccc}
\tt a&\tt b&\tt c&\tt d&\tt e\\
\tt b&\tt c&\tt a&\tt e&\tt f\\
\tt c&\tt a&\tt b&\tt f&\tt d\\
\tt e&\tt d&\tt f&\tt c&\tt a\\
\tt d&\tt f&\tt e&\tt a&\tt b\\
\end{array}\right),\quad
\left(\begin{array}{ccccc}
\tt f&\tt b&\tt c&\tt d&\tt e\\
\tt b&\tt c&\tt a&\tt e&\tt f\\
\tt c&\tt a&\tt b&\tt f&\tt d\\
\tt e&\tt d&\tt f&\tt c&\tt a\\
\tt d&\tt f&\tt e&\tt a&\tt b\\
\end{array}\right).
\]
Note that our two representatives of order 5 differ only in their first
entry. Both can be completed to Latin squares of order 6; in the first
case this Latin square has no transversals, but in the second case it
has eight transversals.

Many of the transversal-free Latin arrays for order $6$ also turn out
to be quite similar to one another. There are exactly $28$ trisotopy
classes for $n=6$.  Previously, nine of these classes were known:
eight Latin squares and the array constructed by Akbari and
Alipour~\cite{AkbariAlipour04} by removing two rows and columns from
the elementary abelian Cayley table of order $8$. We will now describe
the $19$ transversal-free trisotopy classes of order $6$ with seven
symbols.  We will denote their representative arrays by
$L_1,L_2,\dots,L_{19}$. Let
$$L_1 = 
\left(\begin{array}{cccccc}
\mk{a}&\tt b&\tt c&\tt d&\tt e&\tt f\\
\tt b&\mk{c}&\tt a&\tt f&\tt d&\tt e\\
\tt c&\tt a&\mk{b}&\tt e&\tt f&\tt d\\
\tt d&\tt e&\tt f&\tt g&\tt b&\tt c\\
\tt f&\tt d&\tt e&\tt b&\tt g&\tt a\\
\tt e&\tt f&\tt d&\tt c&\tt a&\tt g
\end{array}\right) 
\mbox{ \ and \ }
L' =
 \left(\begin{array}{cccccc}
\tt a&\tt b&\tt c&\tt d&\tt e&\mk{f}\\
\tt c&\tt f&\tt b&\mk{e}&\tt d&\tt a\\
\tt b&\tt c&\tt e&\tt f&\mk{a}&\tt d\\
\tt d&\tt e&\mk{f}&\tt a&\tt b&\tt c\\
\mk{e}&\tt d&\tt a&\tt c&\tt f&\tt b\\
\tt f&\mk{a}&\tt d&\tt b&\tt c&\tt e
 \end{array}\right).
$$

From $L'$, we define $L_{2},\dots,L_{8}$ by changing some
entries on the main diagonal to a new symbol, $\tt g$, in the following
way. Let 
$$R' \in \big\{ \{1,2,3,4,5,6\}, \{1,2,4,5,6\},
\{1,3,4,5\}, \{1,3,6\}, \{1,4\}, \{2,3,5,6\}, \{3,4,5,6\} \big\}.$$
For all $r \in R'$, change the symbol on the main diagonal in row
$r$ of $L'$ to $\tt g$. It turns out that changing the shaded entries
in $L_1$ to $\tt g$ results in an array that is trisotopic to
$L_{2}$. Next, $L_{9}$ is obtained by changing the symbol of the shaded
entries in $L'$ to a new symbol, $\tt g$. Let
$$L_{10} =
  \left(\begin{array}{cccccc}
\tt a&\tt b&\tt c&\tt d&\tt e&\tt f\\
\tt b&\tt c&\tt g&\tt a&\tt f&\tt e\\
\tt c&\tt f&\tt d&\tt g&\tt a&\tt b\\
\tt d&\tt a&\tt f&\tt e&\tt g&\tt c\\
\tt e&\tt g&\tt a&\tt f&\tt c&\tt d\\
\tt g&\tt e&\tt b&\tt c&\tt d&\tt a
 \end{array}\right)
\mbox{ \ and \ }
L'' =
 \left(\begin{array}{cccccc}
\tt a&\tt b&\tt c&\tt d&\tt e&\tt f\\
\tt b&\tt c&\tt a&\tt e&\tt f&\tt d\\
\tt c&\tt a&\tt b&\tt f&\tt d&\tt e\\
\tt d&\tt e&\tt f&\tt a&\tt c&\tt b\\
\tt e&\tt f&\tt d&\tt c&\tt b&\tt a\\
\tt f&\tt d&\tt e&\tt b&\tt a&\tt c
 \end{array}\right).
$$
From $L_{10}$, we can either change the symbol in the $(3,3)$ cell to
$\tt e$, giving $L_{11}$, or change the symbol in the $(4,4)$ cell to $\tt b$,
giving $L_{12}$.

The array $L_{13}$ is obtained by changing the $\tt d$ in rows 2 and 3 of
$L''$ to $\tt g$, as well as changing the $\tt f$ in row 2 to $\tt
d$. Next, $L_{14}$ is obtained by changing the $\tt e$ in row 3 of
$L_{13}$ to $\tt d$.  From the Latin square $L''$, any subset of
entries that contain $\tt d$ may be changed to a new symbol, $\tt
g$. This gives rise to 5 trisotopy classes. In particular, we define
$L_{15},\dots,L_{19}$ by changing some occurrences of $\tt d$ to a new
symbol, $\tt g$, in the following way. Let
$$R'' \in\big\{\{1\},\{1,2\},\{1,2,3\},\{1,3,5\},\{1,4\}\big\}.$$ 
For all $r\in R''$, change the $\tt d$ in row $r$ of $L''$ to $\tt g$.

One can check that $L_{15},\dots,L_{19}$ are transversal-free by exhaustive
computation, but next we give a reason why they have no transversals.
The argument is in the style of the highly successful $\Delta$-lemma
(see \cite{Wan11}). 
Let $L$ be any Latin array obtained by replacing any subset of the
occurrences of $\tt d$ in $L''$ by $\tt g$.
Define functions $\rho,\nu$ to $\Z_3$ by:
\begin{align*}
&\rho(1)=\rho(4)=0,\ \rho(2)=\rho(5)=1,\ \rho(3)=\rho(6)=2,\\
&\nu({\tt a})=\nu({\tt d})=\nu({\tt g})=0,\ \nu({\tt b})=\nu({\tt e})=1,\ \nu({\tt c})=\nu({\tt f})=2.
\end{align*}
Define a function $\Delta$ from the entries of $L$ to $\Z_3$ by 
$\Delta(r,c,s)=\rho(r)+\rho(c)-\nu(s)$. Let $D$ denote
the bottom-right $3\times 3$ subsquare of $L$.
Suppose that $T$ is a transversal of $L$ and that $\bar s$ is the only symbol
in $\{{\tt a,b,\dots,g}\}$ that does not appear in $T$. Then
\begin{equation}\label{e:deltalem}
\sum_{(r,c,s)\in T}\Delta(r,c,s)=
2\sum_{i=1}^6\rho(i)-\sum_{(r,c,s)\in T}\nu(s)=\nu(\bar s).
\end{equation}
Also, if $T$ includes $x$ entries in $D$ then overall it has $2x$ entries with
symbols in $\{{\tt a,b,c}\}$, which means that $x=1$ and 
$\bar s\in\{{\tt a,b,c}\}$.
However, $\Delta(r,c,s)=0$ for all entries of $L$, except those
in $D$, where $\Delta(r,c,s)=\nu(s)$.
Hence to satisfy (\ref{e:deltalem}), the symbol in the only entry of $T$
in $D$ has to be $\bar s$, contradicting the fact that this symbol does 
not appear in $T$.

The argument we have just presented is specific to order $n=6$ and
does not seem to easily generalise to arrays of larger orders.

When performing the search for transversal-free Latin arrays of order
$n = 7$, we found $15\,611\,437$ trisotopy classes of transversal-free
partial Latin arrays of order 5 and at most two holes in each row and
column. Table~\ref{tab:5x5-partial} provides counts of the trisotopy
classes based on number of holes and number of symbols. Since none of
these arrays can extend to a Latin array of order $7$ with no
transversals, we have the following result.

\begin{theorem}\label{th:7x7-no-trans}
  Every Latin array of order $7$ has a transversal.
\end{theorem}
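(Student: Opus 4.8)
The plan is to reduce Theorem~\ref{th:7x7-no-trans} to a finite (if large) computation, using the reduction already outlined in the surrounding text. Suppose for contradiction that $L$ is a transversal-free Latin array of order $7$. Pick two entries in the first two rows of $L$ that share neither a column nor a symbol; after relabelling, these are $(1,1,x)$ and $(2,2,y)$. Deleting rows and columns $1$ and $2$ and replacing every copy of $x$ and of $y$ by a hole yields a partial Latin array $L'$ of order $5$ with at most two holes in each row and each column, and $L'$ has no partial transversal of length $5$ (a length-$5$ partial transversal of $L'$ would combine with $(1,1,x)$ and $(2,2,y)$ to give a transversal of $L$). Since the number of partial transversals is a trisotopy-class invariant, $L'$ is trisotopic to some member of the catalogue $\mathcal{C}_5$ of trisotopy-class representatives of transversal-free partial Latin arrays of order $5$ with at most two holes per line.

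Next I would carry out the enumeration of $\mathcal{C}_5$ by computer. This is done by a canonical-augmentation / orderly-generation search: build partial Latin arrays of order $5$ cell by cell (or row by row), pruning any branch that already admits a length-$5$ partial transversal among its filled and hole cells, pruning any branch with more than two holes in some line, and recording one representative per trisotopy class. The text reports that this yields $15\,611\,437$ classes, with the breakdown by number of holes and number of symbols given in Table~\ref{tab:5x5-partial}. For the final step, for each representative $P\in\mathcal{C}_5$ one fills its two hole-symbols back in as $x$ and $y$ in every consistent way, then tries to extend the resulting order-$5$ partial Latin array to a Latin array of order $7$ by appending two rows and two columns; one checks by exhaustive search that no such completion is transversal-free. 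Since no $P\in\mathcal{C}_5$ extends to a transversal-free Latin array of order $7$, no such $L$ exists, proving the theorem. (Equivalently, and as a useful cross-check, one may also note that every transversal-free Latin array of order $7$ would restrict, via the same deletion argument applied differently, to the catalogued order-$5$ examples, and then directly verify that none of the $28$ order-$6$ transversal-free classes extends to a transversal-free order-$7$ array — but the $n{-}2$ reduction to order $5$ is what makes the search feasible.)

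The main obstacle is computational rather than conceptual: the catalogue $\mathcal{C}_5$ is large, so the generation must exploit trisotopy symmetry aggressively (isomorph rejection via a canonical form under the group generated by row permutations, column permutations, symbol permutations, and transposition) to keep the search from blowing up, and the partial-transversal test — effectively a bipartite matching / Latin-rectangle completion check, complicated by the holes and by having two distinguished hole-symbols — must be fast enough to apply at every node. One must also be careful that the reduction is valid at the boundary: the two chosen entries in rows $1$ and $2$ exist because $n=7\ge 3$, holes are never selected in a (partial) transversal by fiat, and replacing occurrences of $x$ and $y$ by holes (rather than by distinct new symbols) is exactly what makes the order-$5$ object partial-Latin with the stated hole bound. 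Granting the correctness of the search code — whose output counts are reported in Table~\ref{tab:5x5-partial} — the argument above closes, and in particular Table~\ref{tab:l_n} records $\ell(7)=7$.
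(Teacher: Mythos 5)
Your proposal follows essentially the same route as the paper: reduce any hypothetical transversal-free order-$7$ Latin array to a transversal-free order-$5$ partial Latin array with at most two holes per row and column (via the two chosen entries $(1,1,x)$ and $(2,2,y)$), enumerate the catalogue $\mathcal{C}_5$ up to trisotopy by computer, and verify that no member extends to a transversal-free Latin array of order $7$. This is exactly the argument and computation the paper relies on, so your write-up is correct, with the same dependence on the reported exhaustive search.
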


\begin{table}\centering
 \begin{tabular}{@{}ll@{\hskip5pt}r | lllllllllllll@{}}
 \toprule
 &&& \multicolumn{11}{c}{Number of Symbols} \\
 
 &&& 3 & 4 & 5 & 6 & 7 & 8 & 9 & 10 & 11 & 12 & 13\\
 \midrule
\parbox[t]{2mm}{\multirow{11}{*}{\rotatebox[origin=c]{90}{Number of Holes}}} & 
0  && - & - & - & 2 & - & - & - & - & - & - & - \\
& 1  && - & - & 1 & 17 & - & - & - & - & - & - & - \\
& 2  && - & - & 9 & 271 & 13 & - & - & - & - & - & - \\
& 3  && - & - & 137 & 4893 & 1179 & 61 & 5 & - & - & - & - \\
& 4  && - & - & 1484 & 54911 & 31342 & 5539 & 1906 & 462 & 62 & 4 & - \\
& 5  && - & 3 & 10686 & 341251 & 319750 & 58257 & 9823 & 1175 & 86 & 4 & - \\
& 6  && - & 19 & 48436 & 1155690 & 1420192 & 299951 & 33366 & 1953 & 56 & - & - \\
& 7  && - & 151 & 124275 & 2045859 & 2754143 & 670137 & 63480 & 2676 & 30 & - & - \\
& 8  && - & 632 & 159295 & 1720463 & 2198260 & 549316 & 43912 & 1710 & 78 & 8 & 1 \\
& 9  && - & 916 & 80609 & 557285 & 603320 & 134056 & 7120 & 148 & 7 & 1 & - \\
& 10 && 3 & 320 & 9420 & 40418 & 34218 & 6014 & 159 & 1 & - & - & - \\
 \bottomrule
 \end{tabular}
 \caption{\label{tab:5x5-partial}Counts of trisotopy classes of 
   transversal-free $5 \times 5$ partial Latin arrays, categorised by 
   number of symbols and number of holes.}
\end{table}

The approach that we used to prove Theorem~\ref{th:7x7-no-trans} is
infeasible for $n \geq 8$, although we did examine certain interesting
sets of Latin arrays of order 8.  There are 68 different
transversal-free Latin squares of order 8, up to trisotopy. We also
considered all Latin arrays which are obtained by removing one row and
one column from a Latin square of order 9. We could immediately
eliminate any square of order 9 that contains a transversal through
every entry. Latin squares that do not contain a transversal through
every entry are called \emph{confirmed bachelor squares}. The
confirmed bachelor squares of order 9 were generated for~\cite{EW12},
providing us with a set of trisotopy class representatives. None of
these squares has an order 8 transversal-free subarray. Lastly, we
searched all Latin arrays of order 8 with exactly 9 symbols where one
of the symbols appears at most 4 times. None of these were
transversal-free.  The arrays that we have checked are a tiny subset
of all Latin arrays of order 8. Without theoretical insight, it seems
hopeless to check them all. So all that we can conclude at this stage
is that $\ell(8)\ge9$.

It is known that all Latin squares of order $9$ have transversals
(see, e.g.~\cite{EW12}).  We tried, unsuccessfully, to build a
transversal-free Latin array of order $9$. We did this by removing a
row and column from Latin squares of order $10$. The squares that we
used were representatives of all trisotopy classes for which the
autoparatopy group has order $3$ or higher, as generated for
\cite{MMM07}.


The results of our investigations lead us to be skeptical that
Conjecture~\ref{conj:noconst} is true. However, proving that it is
false is likely to be extremely hard, for the reasons explained after
Proposition~\ref{prop:ln-implies-brualdi}. Yet, it also seems hard to
prove a subquadratic bound on $\ell(n)$, or even to prove
Conjecture~\ref{conj:ln}. For $\ell_r(n)$ we know more. Thanks to
\cite{BaratWanless14} and Theorem~\ref{th:rowLatin-bound}, we know
that $\frac12 n^2-O(n)<\ell_r(n)\le \big\lceil\frac{1}{4}({5-\sqrt{5}})
n^2\big\rceil$.

\subsection*{Acknowledgement}
The authors are grateful to Gwene\"al Joret and J\'anos Bar\'at for
interesting discussions.

\bibliographystyle{plain}

\end{document}